\newcommand{\F}{\mathbb{F}}
\newcommand{\N}{\mathbb{N}}
\newcommand{\Ss}{\mathbb{S}}
\newcommand{\bq}{\mathbf{q}}
\DeclareMathOperator{\ord}{ord}
\DeclareMathOperator{\PCN}{PCN}
\DeclareMathOperator{\CN}{CN}
\DeclareMathOperator{\lcm}{lcm}
\title{Further results on the Morgan-Mullen conjecture}
\author{Theodoulos Garefalakis \and Giorgos Kapetanakis}
\institute{Theodoulos Garefalakis \at Department of Mathematics and Applied Mathematics, University of Crete, Voutes Campus, 70013 Heraklion, Greece \\ \email{tgaref@uoc.gr} \\
Giorgos Kapetanakis \at Sabanci University, FENS, Orhanli-Tuzla, 34956 Istanbul, Turkey \\ \email{gnkapet@gmail.com}
}
\begin{document}
\maketitle
\begin{abstract}
  Let $\F_q$ be the finite field of characteristic $p$ with $q$ elements and $\F_{q^n}$ its extension of degree $n$.
  The conjecture of Morgan and Mullen asserts the existence of primitive and completely normal elements (PCN elements) for the extension
  $\F_{q^n}/\F_q$ for any $q$ and $n$. It is known that the conjecture holds for $n \leq q$. In this work we prove the conjecture
  for a larger range of exponents. In particular, we give sharper bounds for the number of completely normal elements and use them to prove asymptotic and effective existence results for $q\leq n\leq O(q^\epsilon)$, where $\epsilon=2$ for the asymptotic results and $\epsilon=1.25$ for the effective ones. For $n$ even we need to assume that $q-1\nmid n$.
\keywords{finite fields \and completely normal element \and primitive element \and normal basis \and completely normal basis}
\subclass{11T24}
\end{abstract}

\section{Introduction}\label{sec:intro}

Let $\F_q$ be the finite field of cardinality $q$ and $\F_{q^n}$ its extension of degree $n$, where $q$ is a prime power and $n$ is a positive integer.
A generator of the multiplicative group $\F_{q^n}^*$ is called \emph{primitive}. Besides their theoretical interest, primitive
elements of finite fields are widely used in various applications, including
cryptographic schemes, such as the Diffie-Hellman key exchange
\cite{diffiehellman76}.

An \emph{$\F_q$-normal basis} of $\F_{q^n}$ is an $\F_q$-basis of $\F_{q^n}$ of the form $\{ x, x^q, \ldots , x^{q^{n-1}} \}$ and the element $x\in\F_{q^n}$ is called \emph{normal over $\F_q$}. These bases bear computational advantages for finite field
arithmetic, so they have numerous applications, mostly in coding theory and cryptography. For further information we refer to \cite{gao93} and the references therein.

It is well-known that primitive and normal elements exist for every $q$ and $n$, see Chapter~2 of \cite{lidlniederreiter97}. The existence of
elements that are simultaneously primitive and normal is also well-known.
\begin{theorem}[Primitive normal basis theorem]\label{thm:pnbt}
Let $q$ be a prime power and $n$ a positive integer. There exists some $x \in
\F_{q^n}$ that is simultaneously primitive and normal over $\F_q$.
\end{theorem}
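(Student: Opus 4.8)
The plan is to follow the character-sum method, initiated by Carlitz and brought to completion by Lenstra and Schoof. For $x\in\F_{q^n}^*$ I would first express the indicator function of ``$x$ is primitive'' through the multiplicative characters of $\F_{q^n}$ as
\[
  \frac{\phi(q^n-1)}{q^n-1}\sum_{d\mid q^n-1}\frac{\mu(d)}{\phi(d)}\sum_{\ord\chi=d}\chi(x),
\]
and, dually, the indicator of ``$x$ is normal over $\F_q$'' through the additive characters. Here one uses that $\F_{q^n}$ is a cyclic module over $\F_q[X]$ under the Frobenius action, with module order $X^n-1$, so that $x$ is normal if and only if it is \emph{free}, i.e.\ its $\F_q$-order equals $X^n-1$; with the polynomial M\"obius function $\mu_q$ and polynomial Euler function $\Phi_q$ this indicator reads
\[
  \frac{\Phi_q(X^n-1)}{q^n}\sum_{D\mid X^n-1}\frac{\mu_q(D)}{\Phi_q(D)}\sum_{\ord\psi=D}\psi(x),
\]
the inner sum being over the additive characters of $\F_{q^n}$ of $\F_q$-order $D$.

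Next I would count the elements that are simultaneously primitive and normal by summing the product of these two indicators over $\F_{q^n}^*$. Expanding, the contribution of the trivial characters ($d=1$ and $D=X^n-1$) yields the expected main term, of size about $\phi(q^n-1)\,\Phi_q(X^n-1)/(q^n-1)$. Every other term contains a hybrid sum $\sum_{x}\chi(x)\psi(x)$; writing $\psi(x)=\psi_1(\alpha x)$ for a fixed nontrivial additive character $\psi_1$ and a suitable $\alpha\in\F_{q^n}^*$, this is a Gauss sum and hence has absolute value $\sqrt{q^n}$. Collecting the bounds, a positive count is guaranteed as soon as an inequality of the shape $q^n > \sqrt{q^n}\,W(q^n-1)\,W(X^n-1)$ holds, where $W(\cdot)$ denotes the number of squarefree (polynomial) divisors.

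The main obstacle is that this crude inequality fails for infinitely many pairs $(q,n)$, so the decisive step is a sieving argument in the spirit of Cohen and Huczy\'nska: one splits off a carefully chosen small set of prime factors of $q^n-1$ and of irreducible factors of $X^n-1$, estimates their contribution separately, and balances the resulting sieving inequality so that only an explicit finite list of exceptional pairs remains --- these are then either dispatched by a direct search for a primitive normal element or, with a sharper sieve, eliminated altogether. Two points require care: choosing which factors to sieve out so as to optimize the bound, and controlling $\Phi_q(X^n-1)/q^n$ when $p \mid n$, since then $X^n-1$ is not squarefree and its factorization over $\F_q$ behaves irregularly --- this is precisely the subtlety that makes the normality half of the argument in characteristic $p$ more delicate than the multiplicative half.
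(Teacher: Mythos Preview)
The paper does not actually prove Theorem~\ref{thm:pnbt}; it is quoted as a known background result, with references to Lenstra--Schoof for the original proof and to Cohen--Huczynska for a computer-free proof via sieving. There is therefore no proof in the paper to compare your proposal against. That said, your outline is a faithful sketch of exactly those referenced arguments: the character-sum expression for the count of primitive normal elements, the Gauss-sum bound $|\sum_x \chi(x)\psi(x)|\le q^{n/2}$ on the mixed terms, and the Cohen--Huczynska sieve to reduce the remaining exceptional pairs to a finite list (which they then eliminate entirely).

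One small slip worth correcting: the main term in your double expansion comes from the pair of \emph{trivial} characters, i.e.\ $d=1$ and $D=1$, not $D=X^n-1$. The additive characters of $\F_q$-order $X^n-1$ are the ones of maximal order and are highly nontrivial; it is $D=1$ that isolates $\psi=\psi_0$ and produces the main term $\phi(q^n-1)\Phi_q(X^n-1)/(q^n-1)$ after summing over $x$.
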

Lenstra and Schoof \cite{lenstraschoof87} were the first to
prove Theorem~\ref{thm:pnbt}. Subsequently, Cohen
and Huczynska \cite{cohenhuczynska03} provided a computer-free proof with the
help of sieving techniques. Several generalizations of this have also been investigated
\cite{cohenhachenberger99,cohenhuczynska10,hsunan11,kapetanakis13,kapetanakis14}.

An element of $\F_{q^n}$ that is simultaneously normal over $\F_{q^l}$ for all $l\mid n$ is called \emph{completely normal over $\F_q$}. The existence of such elements for any $q$ and $n$ is well-known \cite{blessenohljohnsen86}. Morgan and Mullen \cite{morganmullen96} conjectured that for any $q$ and $n$, there exists a primitive completely normal element of $\F_{q^n}$ over $\F_q$.
\begin{conjecture}[Morgan-Mullen]\label{conj:mm}
Let $q$ be a prime power and $n$ a positive integer. There exists some $x \in
\F_{q^n}$ that is simultaneously primitive and completely normal over $\F_q$.
\end{conjecture}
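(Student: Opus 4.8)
The plan is to count the primitive completely normal elements by a characteristic-function sieve, following the character-sum method of Lenstra--Schoof and Cohen--Huczynska but now carrying the multiplicative condition and the \emph{full} module-theoretic condition simultaneously. First I would write the indicator of primitivity as a weighted sum over the multiplicative characters of $\F_{q^n}^*$ whose order divides $q^n-1$, and the indicator of complete normality through the $\F_q[X]$-module structure of $\F_{q^n}$: an element is normal over $\F_{q^l}$ exactly when it generates $\F_{q^n}$ as a module over the relevant group algebra, and complete normality is the conjunction of these conditions over all $l\mid n$, which corresponds to running over the module factors attached to the factorisation of $X^n-1$ over $\F_q$. Combining the additive-character indicators for all $l\mid n$ with the multiplicative indicator yields an exact expression
\begin{equation*}
N=\frac{\varphi(q^n-1)}{q^n-1}\cdot\frac{\Phi}{q^n}\sum_{\chi,\,\psi}S(\chi,\psi),
\end{equation*}
where $\chi$ ranges over the multiplicative characters of order dividing $q^n-1$, $\psi$ ranges over the additive characters attached to the module factors for all $l\mid n$, $\Phi$ is the completely normal totient, and each $S(\chi,\psi)$ is a mixed Gauss sum; the pair of trivial characters contributes the main term $S=q^n$.

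Next I would bound the off-diagonal terms by Weil-type estimates, so that every nontrivial pair contributes at most $q^{n/2}$ up to bounded constants, giving a sufficient condition for $N>0$ of the shape
\begin{equation*}
q^{n/2}>W(q^n-1)\cdot W_{\mathrm{cn}}(n),
\end{equation*}
where $W(q^n-1)$ is the number of squarefree divisors of $q^n-1$ and $W_{\mathrm{cn}}(n)$ counts the analogous completely normal module factors coming from $X^n-1$. To make this hold for as many pairs $(q,n)$ as possible I would apply a Cohen--Huczynska sieve, peeling off the small prime and small irreducible-factor conditions and treating them by inclusion--exclusion; this replaces the crude $W$-factors by a much smaller effective count and lowers the threshold substantially. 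For the ranges where the inequality still fails I would fall back on the known cases $n\le q$ together with direct computation for the finitely many residual pairs.

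The genuine obstacle is that this scheme \emph{cannot} close for all $n$. The main term grows like $q^n$ times the product of the two totient densities, while the combinatorial penalty $W(q^n-1)\,W_{\mathrm{cn}}(n)$ — even after sieving — outgrows any fixed power of $q$ once $n$ acquires many prime divisors, so the basic inequality holds only in a range of the form $n\le O(q^{c})$. The crux is structural: complete normality, by Hachenberger's decomposition theorem, reduces to the prime-power extensions $\F_{q^{p^a}}/\F_q$ and is therefore essentially multiplicative in the factorisation of $n$, whereas primitivity is a single global condition on $\F_{q^n}^*$ that does not respect this decomposition — one cannot glue primitive completely normal elements of the prime-power layers into such an element of $\F_{q^n}$. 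To go beyond the bounded range one would need either cancellation beyond the Weil bound in the mixed additive--multiplicative sums, or a structural argument producing a single element that meets every module condition while forcing its multiplicative order, for instance an explicit completely normal construction whose norms to all subfields can be controlled. I expect this structural step to be the main difficulty, and it is precisely here that the conjecture remains open; the present work therefore settles the restricted range in which the sieve inequality does close, and relies on the known result for $n\le q$ for the small cases.
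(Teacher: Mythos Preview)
This is a conjecture, and the paper does not prove it; it establishes only the partial Theorems~\ref{thm:our_asymptotic} and~\ref{thm:our_effective}. Your proposal is candid about this: you say outright that the scheme cannot close for all $n$ and that the conjecture remains open beyond a range $n\le O(q^c)$. So what you have written is not a proof of Conjecture~\ref{conj:mm}, and you correctly flag the gap yourself.

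As a description of the paper's \emph{partial} argument, your outline is essentially on target. The paper expresses $\CN_q^r(n)$ via $\omega_r\cdot\prod_{i}\varOmega_{l_i}$, bounds the nontrivial Gauss sums by $q^{n/2}$ to obtain Theorem~\ref{thm:our2}, derives a sufficient condition of exactly the shape $\CN_q(n)>q^{n/2}W(q')\prod_i W_{l_i}(F'_{l_i})\theta(\bq)$, and applies the Cohen--Huczynska sieve (Propositions~\ref{propo:siev1}--\ref{propo:siev2}) together with the Morgan--Mullen tables for the handful of surviving pairs. The one substantive ingredient you understate is the lower bound for $\CN_q(n)$: the main term in the comparison is $\theta(r)\CN_q(n)$, not $q^n$ times a naive density, and the bulk of the paper (Section~\ref{sec:cn}, culminating in Theorem~\ref{thm:cn-bound-final}) is devoted to proving explicit inequalities such as $\CN_q(n)\ge q^n\bigl(\tfrac{2}{q+1}-\tfrac{1.45\,n}{q^2(q+1)}\bigr)$, which is what pushes the effective range up toward $q^{4/3}$ and the asymptotic range up toward $q^2$. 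Your diagnosis of why the method stalls --- the additive-side factor $\prod_i W_{l_i}(F'_{l_i})$ grows with the divisor structure of $n$, while primitivity does not decompose along the prime-power tower --- matches the paper's own assessment in Section~\ref{sec:conclusions}.
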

In order to support their claim, Morgan and Mullen provide examples for such elements for all pairs $(q,n)$ with $q\leq 97$ and $q^n<10^{50}$, see \cite{morganmullen96}. This conjecture is yet to be completely resolved. Partial results, covering certain types of extensions have been given, see \cite{hachenberger13} and the references therein. Recently, Hachenberger \cite{hachenberger16}, using elementary methods, proved the validity of Conjecture~\ref{conj:mm} for $q\geq n^3$ and $n\geq 37$.
In \cite{garefalakiskapetanakis18}, the range was improved to $n\leq q$.

In this work we extend the range for which Conjecture~\ref{conj:mm} holds. In particular, we prove the following theorems:
\begin{theorem}\label{thm:our_asymptotic}
  There exists $c\in\N$ such that for every prime power $q\geq c$
  and every $n\in\N$ satisfying
  \begin{enumerate}
  \item $n$ odd, and $q\leq n\leq q^2$, or
    \item $n$ even, $q-1\nmid n$ and $q\leq n\leq 0.43\cdot q^2$,
  \end{enumerate}
  there exists a primitive and completely normal element for the
  extension $\F_{q^n}/\F_q$
\end{theorem}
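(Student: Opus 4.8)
The plan is to attack the existence of primitive completely normal elements by the standard character-sum/sieve machinery, but feeding it with the sharper lower bound for the count of completely normal elements that the paper promises in its abstract. Concretely, I would let $N_{\mathrm{cn}}$ denote the number of elements of $\F_{q^n}$ that are completely normal over $\F_q$, and let $N_{\mathrm{pcn}}$ denote the number of those that are also primitive. The first step is to express the "primitive" condition through the usual Vinogradov-type characteristic function: for $m = q^n - 1$, the indicator of primitivity is a weighted sum over multiplicative characters of order dividing $m$, so that
\begin{equation}
N_{\mathrm{pcn}} = \frac{\varphi(m)}{m}\sum_{d\mid m}\frac{\mu(d)}{\varphi(d)}\sum_{\ord(\chi)=d}\Bigl(\sum_{x\ \mathrm{cn}}\chi(x)\Bigr).
\end{equation}
The $d=1$ term contributes exactly $\frac{\varphi(m)}{m}N_{\mathrm{cn}}$, and the goal is to show the remaining terms cannot cancel this.

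The second step is the core estimate: bounding $\bigl|\sum_{x\ \mathrm{cn}}\chi(x)\bigr|$ for nontrivial $\chi$. Here I would use the module-theoretic description of complete normality — an element is completely normal iff its "order" with respect to the $\F_q[T]$-action of Frobenius on $\F_{q^\ell}$ is the full $T^{\ell/\ell'} - \dots$ pattern for every $\ell \mid n$, equivalently it avoids a finite union of proper $\F_{q^{\ell'}}$-submodules. Writing the completely-normal indicator via the standard additive-character (Gauss-sum) expansion over these module conditions, one gets a main term $N_{\mathrm{cn}}$ times $\chi$-averaging plus an error controlled by Weil-type bounds, yielding a bound of the shape $\bigl|\sum_{x\ \mathrm{cn}}\chi(x)\bigr| \le W \cdot q^{n/2}$ where $W$ collects divisor-type factors (the product of the "number of pieces" over the lattice of divisors of $n$, times $2^{\omega(\text{something})}$). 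The paper's claimed improvement presumably sharpens exactly this $W$, or sharpens the density $N_{\mathrm{cn}}/q^n$, so I would quote that sharper bound rather than rederive it. The dichotomy between $n$ odd and $n$ even, and the hypothesis $q-1\nmid n$ in the even case, must enter precisely here: when $q-1 \mid n$ and $n$ even there is an extra degenerate submodule (a "$2$-adic anomaly" of the sort familiar from the theory of completely normal bases) that inflates $W$ and wrecks the inequality, which is why that case is excluded; the constant $0.43$ is the price paid by the worst surviving even $n$.

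The third step is to assemble the inequality $N_{\mathrm{pcn}} > 0$. Combining the above, it suffices that
\begin{equation}
\frac{N_{\mathrm{cn}}}{q^n} > W\cdot q^{-n/2}\sum_{1<d\mid m}\frac{1}{\varphi(d)}\cdot\bigl(\text{number of }\chi\text{ of order }d\bigr),
\end{equation}
and the inner sum is the familiar $\prod_{p\mid m}\bigl(1 + \frac{p-1}{p-2}\bigr)$-type quantity, bounded by $c_1 \cdot 2^{\omega(m)} \le c_1\cdot q^{o(n)}$. Since $N_{\mathrm{cn}}/q^n$ is, by the sharper bound, at least some fixed power $q^{-\kappa}$ (or even bounded below by a positive function decaying polynomially in terms of $n$ and $q$), the inequality reduces to $q^{n/2} > W\cdot q^{\kappa + o(n)}$, and one checks this holds once $q$ is large, for $n$ in the stated range — the bound $n \le q^2$ (resp. $0.43 q^2$) is exactly what makes the divisor factors $W$ and the $2^{\omega(m)}$ term, which grow like $q^{O(n/\log n)}$, stay below $q^{n/2}$ with room to spare. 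Then a sieving step (Cohen–Huczynska style, splitting $m$ into a squarefree "core" times "sieving primes") is applied to push the threshold $c$ down, though for the purely asymptotic statement this is optional.

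The main obstacle I anticipate is the error term $W$ in the completely-normal character sum. Complete normality is a conjunction of normality conditions over every intermediate field $\F_{q^\ell}$, $\ell \mid n$, and naively each imposes its own Gauss-sum factor, so $W$ could balloon multiplicatively over the divisors of $n$ and overwhelm $q^{n/2}$ unless one is careful. The resolution — and I expect this is the technical heart of the paper — is to handle the "completely normal" conditions \emph{simultaneously} rather than one field at a time, using Hachenberger's theory of complete modules / the "decomposition of $n$ into $q$-regular parts" so that the genuinely independent conditions are far fewer than $\tau(n)$, together with a careful treatment of the non-$q$-regular (i.e., $p$-part and the troublesome $2$-part) contributions. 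Getting $W$ down to something like $q^{O(n/\log\log n)}$ or better — and, for the even case, quantifying the loss from $q-1 \nmid n$ to get the clean constant $0.43$ — is where all the real work lies; everything else is bookkeeping with $\mu$ and $\varphi$.
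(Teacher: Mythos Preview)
Your overall architecture is right and matches the paper: express $\PCN_q(n)$ via the multiplicative-character expansion, separate the trivial character (giving $\theta(q')\CN_q(n)$), and bound the rest by Gauss sums times a divisor-type factor $W$. Where you go astray is in locating the source of the hypotheses $q-1\nmid n$ and $n\le 0.43\,q^2$, and in over-engineering the control of $W$.

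First, the condition $q-1\nmid n$ and the constant $0.43$ have nothing to do with $W$ or with any ``2-adic anomaly'' in the completely-normal indicator. They come entirely from the \emph{lower bound} on $\CN_q(n)$. The paper proves, by a direct inclusion--exclusion over the non-normality loci at each intermediate level, that for $n$ even
\[
  \CN_q(n)\ \ge\ q^n\Bigl(\tfrac12+\tfrac{1}{q+1}-\tfrac{0.86\,n}{q^2}\Bigr),
\]
but only under the assumption $g:=(n,q-1)\le (q-1)/2$, i.e.\ $q-1\nmid n$; if $g=q-1$ the dominant term $-g/(q+1)$ kills the bound. The threshold $n\le 0.43\,q^2$ is exactly what keeps $\tfrac12-\tfrac{0.86\,n}{q^2}$ nonnegative, so that at least $1/(q+1)$ survives. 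For $n$ odd one gets a weaker bound $\CN_q(n)\ge q^n\bigl(\tfrac{2}{q+1}-O(n/q^3)\bigr)$ without any divisibility hypothesis, which is why the odd case allows the full range $n\le q^2$.

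Second, the factor $W$ is handled far more crudely than you anticipate: no Hachenberger decomposition, no reduction of the number of ``genuinely independent'' conditions. The paper simply bounds each additive-character block by $W_{l_i}(F'_{l_i})\le 2^{n/l_i}$ and takes the product over all proper divisors $l_i$ of $n$, giving $W\le 2^{t(n)-1}$ with $t(n)=\sigma(n)$ the sum-of-divisors function. Robin's theorem then gives $t(n)\le e^{\gamma}n\log\log n+O(n/\log\log n)$, and together with $W(q')\le 4.9\,q^{n/4}$ one is left with an inequality of the shape
\[
  \frac{q^{n/4}}{q+1}\ \gg\ 4^{\,n\log\log n},
\]
which holds for $n$ large because $q\ge \sqrt{n}$ makes the left side grow like $n^{n/8}$. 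So the mechanism that tames $W$ is Robin's bound on $\sigma(n)$, not any structural refinement of the completely-normal conditions; your proposed route through module decomposition would be unnecessary here.
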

\begin{theorem}\label{thm:our_effective}
Let $q$ be a prime power and $n$ an integer. There exists a primitive element of $\F_{q^n}$ that is completely normal over $\F_q$ in the following cases:
\begin{enumerate}
  \item $n$ is odd and $n<q^{4/3}$ and
  \item $n$ is even, $q-1\nmid n$ and $n<q^{5/4}$.
\end{enumerate}
\end{theorem}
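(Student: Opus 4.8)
The plan is to count, via character sums, the number $\PCN(q,n)$ of elements of $\F_{q^n}$ that are simultaneously primitive and completely normal over $\F_q$, and to prove this count positive. Primitivity is detected by multiplicative characters through the usual Vinogradov-type formula weighted by $\mu/\phi$ over the divisors of $q^n-1$; complete normality is detected by additive characters, exploiting the $\F_{q^d}[\sigma]$-module structure of $\F_{q^n}$ for the divisors $d\mid n$, together with the reduction of ``completely normal over $\F_q$'' to a product of local cyclotomic freeness conditions and the sharper count of $\CN(q,n)$ obtained in the previous sections. Expanding the product and applying orthogonality, the diagonal term produces the expected main term $\theta\cdot q^n$ --- with $\theta$ the product of the density of primitive elements and of the densities of normal elements over the relevant subfields --- while the off-diagonal terms are mixed multiplicative--additive character sums, bounded by Weil's theorem by $q^{n/2}$ times combinatorial factors: $W(q^n-1)$ on the multiplicative side and, on the additive side, $2^{r}$ with $r$ the number of distinct irreducible factors of $X^n-1$ over $\F_q$ (more precisely, a product of such quantities over $d\mid n$, trimmed by the local reduction). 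Hence it suffices to establish, explicitly, an inequality of the shape $\theta\, q^{n/2} > W(q^n-1)\cdot 2^{r}\cdot(\text{lower-order factors})$.

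For an \emph{effective} conclusion this crude inequality is useless, since $W(q^n-1)$ and $2^{r}$ can be of size roughly $q^{n/2}$. The remedy is the sieving technique of Cohen and Huczynska \cite{cohenhuczynska03}: choose a threshold and let $\mathcal{P}$ consist of the prime divisors of $q^n-1$ below it, together with the low-degree irreducible factors of the polynomials $X^{n/d}-1$, and impose the associated ``$p$-free'' conditions one at a time rather than simultaneously. The sieve inequality then gives $\PCN(q,n)\ge \sum_{p\in\mathcal{P}} N(p) - (|\mathcal{P}|-1)\,N_0$, where each of $N(p)$ and $N_0$ involves only $O(1)$ additional characters, so that its error term is governed by $W$ of the \emph{core} --- the part of $q^n-1$ and of $X^n-1$ from which the sieved primes and factors have been removed. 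Optimizing the threshold leads to an explicit inequality of the form $\theta\, q^{n/2} > c_1\,|\mathcal{P}|\cdot W(\text{core})\cdot 2^{r_0}$; feeding in effective upper bounds for $\omega(q^n-1)$, for $|\mathcal{P}|$, and for the number of low-degree irreducible factors of $X^n-1$ over $\F_q$, this holds whenever $n$ is odd with $n< q^{4/3}$, or $n$ is even with $q-1\nmid n$ and $n< q^{5/4}$. The finitely many residual pairs $(q,n)$ lying below the effective threshold are then settled by direct machine computation --- a range largely already covered by the tables of Morgan and Mullen \cite{morganmullen96} --- while the remaining cases fall under Theorem~\ref{thm:our_asymptotic} (for $q$ large) or the earlier result $n\le q$ of \cite{garefalakiskapetanakis18}.

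The exclusion $q-1\nmid n$ in the even case, and the weaker exponent there, trace to a single phenomenon: when $q-1\mid n$, the polynomial $X^{q-1}-1=\prod_{a\in\F_q^*}(X-a)$ --- which splits into $q-1$ distinct linear factors over $\F_q$ --- divides $X^n-1$, so $r\ge q-1$ and the additive combinatorial factor is at least $2^{q-1}$, swamping any $q^{n/2}$ gain in the ranges at hand; since $q-1$ is even for odd $q$, this obstruction is essentially confined to even $n$. The principal difficulty in executing the plan is the quantitative bookkeeping: one must bound, with fully explicit constants valid for \emph{every} $q$ rather than only for $q$ large, the competition between the growth of $W(q^n-1)$, the growth in $n$ of the number of small-degree factors of $X^n-1$, the cardinality of the sieving set $\mathcal{P}$, and the density $\theta$, and must choose the threshold so that the resulting ``sieving excess'' remains bounded. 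The gap between the exponents $5/4,\ 4/3$ obtained here and the exponent $2$ of Theorem~\ref{thm:our_asymptotic} is precisely the price of effectivity down to small $q$.
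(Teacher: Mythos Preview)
Your overall arc --- character-sum expansion, main term versus Weil-bounded error, finite residual list checked by computation and by Morgan--Mullen --- matches the paper. But you have the architecture inverted. You declare the direct inequality ``useless'' and make the Cohen--Huczynska sieve the central engine; the paper does the opposite. The direct bound of Theorem~\ref{thm:our2}, fed with the sharp lower bound on $\CN_q(n)$ from Section~\ref{sec:cn} and the elementary estimate $W(q')\le c_{q',12}\,q^{n/12}$ of Lemma~\ref{lemma:w(r)}, already disposes of all but a finite, explicitly computable list of pairs $(q,n)$. Concretely, the paper bounds $\prod_i W_{l_i}(F'_{l_i})\le 2^{t(n)-1}$, controls $t(n)$ by Robin's theorem, and substitutes $q=n^{3/4}$ (resp.\ $q=n^{4/5}$) into
\[
q^{5n/12}\Bigl(\tfrac{2}{q+1}-\tfrac{1.45\,n}{q^2(q+1)}\Bigr)\ \ge\ 1.06\cdot 10^{24}\cdot 2^{t(n)-1}
\]
(and its even-$n$ analogue) to settle all $n\ge 14561$ (resp.\ $n\ge 5719$) at once, with no sieving. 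Below that threshold it computes $t(n)$, $c_{q',12}$, and eventually $W(q')$ and the $W_{l_i}(F'_{l_i})$ exactly, and invokes Theorem~\ref{thm:cbe} on completely basic extensions, whittling the exceptional list down to the 18 pairs of Table~\ref{tab:1}. Only \emph{then} is the sieve applied (Propositions~\ref{propo:siev1}--\ref{propo:siev2}), and only on the multiplicative side, to dispatch 10 of those 18; the remaining 8 fall to the Morgan--Mullen tables.

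So the ``crude'' inequality is not useless: combined with the new $\CN_q(n)$ bound it is the main tool, and your plan overlooks this. Your proposed two-sided sieve (on prime divisors of $q^n-1$ \emph{and} on low-degree factors of $X^{n/d}-1$) is a legitimate variant but is not what the paper does, and you offer no evidence it would recover the specific exponents $4/3$ and $5/4$. Finally, your diagnosis of the hypothesis $q-1\nmid n$ is in the right spirit but locates the obstruction on the wrong side of the inequality: in the paper it enters through the \emph{lower} bound on $\CN_q(n)$ --- the parameter $g=(m,q-1)$ in Propositions~\ref{prop:cn-bound-1} and~\ref{prop:cn-bound-2} must be bounded away from $q-1$ --- rather than through the additive factor $2^r$ on the error side.
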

In Section~\ref{sec:prelim}, we prove our main technical tool, Theorem~\ref{thm:our2}. In Section~\ref{sec:cn}, we prove some new bounds for the number of completely normal elements. In Section~\ref{sec:proof1} we combine Theorem~\ref{thm:our2} and the bounds of Section~\ref{sec:cn} to establish Theorem~\ref{thm:our_asymptotic}. Next, in Section~\ref{sec:proof2}, we combine Theorem~\ref{thm:our2} and the bounds of Section~\ref{sec:cn} to establish Theorem~\ref{thm:our_effective}, for all but a small number of possible exceptions, that are dealt with, either by employing the Cohen-Huczynska \cite{cohenhuczynska03,cohenhuczynska10} sieving techniques, or by relying on the Morgan-Mullen \cite{morganmullen96} examples. We conclude this work with some remarks about possible further improvements in Section~\ref{sec:conclusions}.
\section{Preliminaries}\label{sec:prelim}
The notion of primitivity can be generalized as follows. We call $x\in\F_{q^n}$ \emph{$r$-free}, where $r \mid q^n-1$, if $x=y^d$ for some $d\mid r$ and $y\in\F_{q^n}$ implies $d=1$. Clearly, the primitive elements are exactly the $q'$-primitive elements, where $q'$ is the square-free part of $q^n-1$. In addition, notice that $r$-freeness depends solely on the prime divisors of $r$, that is one may freely interchange between $r$ and its square-free part.

By using Vinogradov's formula for generators of cyclic modules over Euclidean domains, it can be shown that the characteristic function for $r$-free elements of $\F_{q^n}$, where $r\mid q'$, is
\[
\omega_r(x) := \theta(r) \sum_{\chi\in\widehat{\F_{q^n}^*},\ \ord(\chi) \mid r}\frac{\mu(\ord(\chi))}{\phi(\ord(\chi))} \chi(x),
\]
where $\theta(r):=\phi(r)/r$, $\mu$ is the M\"obius function, $\phi$ is the Euler function and the \emph{order} of the multiplicative character $\chi$, denoted as $\ord(\chi)$, is defined as its multiplicative order in $\widehat{\F_{q^n}^*}$. Also, for the sake of simplicity, we denote $\omega := \omega_{q'}$, thus $\omega$ is the characteristic function for primitive elements.

Similarly, the characteristic function for elements of $\F_{q^n}$ that are normal over $\F_{q^l}$ is
\[
\varOmega_l(x) := \theta_l(X^{n/l}-1) 
    \sum_{\psi\in\widehat{\F_{q^n}}} \frac{\mu_l(\ord_l(\psi))}{\phi_l(\ord_l(\psi))} \psi(x) ,
\]
where $\theta_l(X^{n/l}-1):= \phi_l(F_l')/q^{l\cdot\deg(F_l')}$, $F_l'$ is the square-free part of $X^{n/l}-1\in\F_{q^l}[X]$, $\mu_l$ and $\phi_l$ are the M\"obius and Euler functions in $\F_{q^l}[X]$ respectively and the \emph{order} of an additive character $\psi$ of $\F_{q^n}$ over $\F_{q^l}$, denoted as $\ord_l(\psi)$, is defined as the lowest degree monic polynomial $G=\sum_{i=0}^m G_iX^i \in\F_{q^l}[X]$, such that $\psi \left( \sum_{i=0}^m G_i x^{q^i} \right) = 1$ for all $x\in\F_{q^n}$. It is straightforward to check that $\ord_l(\psi) \mid X^{n/l}-1$ in $\F_{q^l}[X]$.

Let $\CN_q^r(n)$ be the number of $r$-free completely normal elements of $\F_{q^n}$ over $\F_q$ and $\PCN_q(n)$ be the number of primitive completely normal elements of $\F_{q^n}$ over $\F_q$, so that $\PCN_q(n)=\CN_q^{q'}(n)$. Further, let $\CN_q(n)$ be the number of completely normal elements of $\F_{q^n}$ over $\F_q$. Assume that $\{1=l_1<\ldots <l_k<n\}$ is the set of proper divisors of $n$. Since all $x\in\F_{q^n}^*$ are normal over $\F_{q^n}$, it follows that an element of $\F_{q^n}$ is completely normal over $\F_q$ if and only if it is normal over $\F_{q^{l_i}}$ for all $i=1,\ldots ,k$. To simplify our notation, we denote $\bq=(X^{n/l_1}-1,\ldots,X^{n/l_k}-1)$ and $\theta(\bq)=\prod_{i=1}^k \theta_{l_i}(X^{n/l_i}-1)$.
We compute
\begin{eqnarray*}
\CN_q(n) &=& \sum_{x\in\F_{q^n}} \left(\varOmega_{l_1}(x) \cdots \varOmega_{l_{k}}(x) \right) \\
&=& \theta(\bq) \sum_{(\psi_1,\ldots,\psi_k)}
\prod_{i=1}^k\frac{\mu_{l_i}(\ord_{l_i}(\psi_i))}{\phi_{l_i}(\ord_{l_i}(\psi_i))}
\sum_{x\in\F_{q^n}} \psi_1\cdots\psi_k(x) ,
\end{eqnarray*}
where the sums extends over all $k$-tuples of additive characters. Noting that 
\[
\sum_{x\in\F_{q^n}} \psi_1\cdots\psi_k(x) =0,\ \ \ \mbox{ for }\ \psi_1\cdots\psi_k\neq \psi_0,
\]
we obtain
\[
  \CN_q(n) = q^n\ \theta(\bq) \sum_{\substack{(\psi_1,\ldots,\psi_k)\\ \psi_1\cdots\psi_k=\psi_0}}
\prod_{i=1}^k\frac{\mu_{l_i}(\ord_{l_i}(\psi_i))}{\phi_{l_i}(\ord_{l_i}(\psi_i))}.
\]
The following theorem is a direct generalization of \cite[Theorem~3.1]{garefalakiskapetanakis18} and it is the main technical result from which all the sufficient conditions in the proofs of Theorems~\ref{thm:our_asymptotic} and \ref{thm:our_effective} are derived.
\begin{theorem}\label{thm:our2}
Let $q$ be a prime power, $n\in\N$ and $r$ a square-free divisor of $q^n-1$, then
\[
|\CN_q^r(n) - \theta(r)\CN_q(n)| \leq q^{n/2} W(r)W_{l_1}(F_{l_1}') \cdots W_{l_k}(F_{l_k}')\theta(r)\theta(\bq) ,
\]
where $W(r)$ is the number of positive divisors of $r$ and $W_{l_i}(F_{l_i}')$ is the number of monic divisors of $F_{l_i}'$ in $\F_{q^{l_i}}[X]$.
\end{theorem}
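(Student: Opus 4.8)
The plan is to combine the two characteristic functions --- $\omega_r$ for $r$-free elements and $\varOmega_{l_1},\dots,\varOmega_{l_k}$ for normality over each proper subfield --- into a single expression for $\CN_q^r(n)$ and then separate out the ``principal'' term. Concretely, I would write
\[
\CN_q^r(n) = \sum_{x\in\F_{q^n}} \omega_r(x)\,\varOmega_{l_1}(x)\cdots\varOmega_{l_k}(x),
\]
expand each factor according to its Vinogradov-type formula, and interchange the order of summation so that the inner sum is $\sum_{x\in\F_{q^n}} \chi(x)\psi_1\cdots\psi_k(x)$, where $\chi$ ranges over multiplicative characters with $\ord(\chi)\mid r$ and each $\psi_i$ ranges over additive characters with $\ord_{l_i}(\psi_i)\mid X^{n/l_i}-1$. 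The key orthogonality input is that this inner sum is a Gauss sum: it vanishes unless the combined additive character $\psi_1\cdots\psi_k$ is nontrivial, and it has absolute value exactly $q^{n/2}$ when both $\chi\neq\chi_0$ and $\psi_1\cdots\psi_k\neq\psi_0$; when $\chi=\chi_0$ it equals $q^n$ if $\psi_1\cdots\psi_k=\psi_0$ and $0$ otherwise.

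Next I would isolate the term $\chi=\chi_0$. Summing over the tuples $(\psi_1,\dots,\psi_k)$ with $\psi_1\cdots\psi_k=\psi_0$ and using the expression for $\CN_q(n)$ derived just above the theorem statement, this term contributes exactly $\theta(r)\,\CN_q(n)$ (the M\"obius--Euler factor for the trivial multiplicative character is $\mu(1)/\phi(1)=1$, and the leading constant $\theta(r)$ factors out cleanly). Hence
\[
\CN_q^r(n) - \theta(r)\CN_q(n) = \theta(r)\theta(\bq)\sum_{\substack{\chi\neq\chi_0,\ \ord(\chi)\mid r}}\ \sum_{\substack{(\psi_1,\dots,\psi_k)\\ \psi_1\cdots\psi_k\neq\psi_0}}\frac{\mu(\ord(\chi))}{\phi(\ord(\chi))}\prod_{i=1}^k\frac{\mu_{l_i}(\ord_{l_i}(\psi_i))}{\phi_{l_i}(\ord_{l_i}(\psi_i))}\sum_{x\in\F_{q^n}}\chi(x)\psi_1\cdots\psi_k(x),
\]
and I would bound this by triangle inequality. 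Each Gauss sum has modulus $q^{n/2}$; the M\"obius factors have modulus at most $1$ in absolute value (they are $0$ or $\pm 1/\phi$, so bounded by $1$), and crucially the number of nonzero terms is controlled: the multiplicative characters $\chi$ with $\ord(\chi)\mid r$ and $\mu(\ord(\chi))\neq 0$ number at most $W(r)$ (one subgroup of characters of order $d$ for each squarefree $d\mid r$, with exactly $\phi(d)$ characters of order $d$, so $\sum_{d\mid r}\phi(d)\cdot\frac{1}{\phi(d)}\cdot[\text{$d$ squarefree}]$ gives the factor-of-divisors count $W(r)$ after the $1/\phi$ is absorbed); likewise for each $i$ the additive characters $\psi_i$ with $\ord_{l_i}(\psi_i)\mid F_{l_i}'$ contributing nonzero M\"obius value are grouped so that the total count weighted by $1/\phi_{l_i}$ is $W_{l_i}(F_{l_i}')$. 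Multiplying these counts together and pulling out $q^{n/2}\theta(r)\theta(\bq)$ yields the claimed bound.

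The main obstacle is bookkeeping the character counts correctly: one must argue that $\sum_{\chi\neq\chi_0,\,\ord(\chi)\mid r}\bigl|\mu(\ord(\chi))/\phi(\ord(\chi))\bigr|\le W(r)$ (and the analogous additive statement over $\F_{q^{l_i}}[X]$), which hinges on the standard identity that for each squarefree $d$ there are exactly $\phi(d)$ characters of order $d$, so the $1/\phi(d)$ factor exactly cancels the multiplicity and each divisor $d$ of $r$ contributes at most $1$. The additive case is identical in spirit, with divisors of $F_{l_i}'$ in $\F_{q^{l_i}}[X]$ replacing divisors of $r$ and $\phi_{l_i}$ counting the polynomials of a given order. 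Everything else --- the Gauss sum evaluation, the extraction of the principal term, the triangle inequality --- is routine; this is why the theorem is billed as a ``direct generalization'' of \cite[Theorem~3.1]{garefalakiskapetanakis18}, the only new ingredient being the extra multiplicative character $\chi$ (previously taken trivial) run through the same machinery.
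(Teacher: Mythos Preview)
Your proposal is correct and follows essentially the same route as the paper: expand $\CN_q^r(n)$ via the characteristic functions, split off the $\chi=\chi_0$ contribution (which recovers $\theta(r)\CN_q(n)$), and bound the remaining sum over $\chi\neq\chi_0$ using the Gauss-sum estimate $|\sum_x \chi(x)\psi(x)|\le q^{n/2}$ together with the divisor-counting identity $\sum_{\ord(\chi)\mid r}|\mu(\ord(\chi))|/\phi(\ord(\chi))=W(r)$ (and its $\F_{q^{l_i}}[X]$-analogue). The only cosmetic differences are that the paper does not impose the restriction $\psi_1\cdots\psi_k\neq\psi_0$ in $S_{2,r}$ (those terms vanish anyway for $\chi\neq\chi_0$, so your equality is still correct) and it records the slightly sharper factor $W(r)-1$ before relaxing to $W(r)$.
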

\begin{proof}
Using the characteristic functions, as presented earlier, we deduce that
\begin{align*}
\CN_q^r(n) & = \sum_{x\in\F_{q^n}} \left(\omega_r (x) \varOmega_{l_1}(x) \cdots \varOmega_{l_{k}}(x) \right)\\
&= \theta(r)\theta(\bq)\sum_{\chi}\sum_{(\psi_1,\ldots,\psi_k)}
 \frac{\mu(\chi)}{\phi(\chi)}\prod_{i=1}^k\frac{\mu_{l_i}(\ord_{l_i}(\psi_i))}{\phi_{l_i}(\ord_{l_i}(\psi_i))}
 \sum_{x\in\F_{q^n}} \psi_1\cdots\psi_k(x) \chi(x) \\
 &= \theta(r)\theta(\bq) (S_1+S_{2,r}),
\end{align*}
where the term $S_1$ is the part of the above sum that corresponds to $\chi=\chi_0$, the trivial character. It follows that
\[
S_1 = \sum_{(\psi_1,\ldots,\psi_k)}
 \prod_{i=1}^k\frac{\mu_{l_i}(\ord_{l_i}(\psi_i))}{\phi_{l_i}(\ord_{l_i}(\psi_i))}
 \sum_{x\in\F_{q^n}} \psi_1\cdots\psi_k(x)
 = \frac{\CN_q(n)}{\theta(\bq)} .
\]
Also, $S_{2,r}$ is the part that corresponds to $\chi\neq \chi_0$,
\[
 S_{2,r} = \sum_{\chi\neq\chi_0}\sum_{(\psi_1,\ldots,\psi_k)}
 \frac{\mu(\chi)}{\phi(\chi)}\prod_{i=1}^k\frac{\mu_{l_i}(\ord_{l_i}(\psi_i))}{\phi_{l_i}(\ord_{l_i}(\psi_i))}
 \sum_{x\in\F_{q^n}} \psi_1\cdots\psi_k(x) \chi(x).
 \]
 In the last sum, note that the summations runs on multiplicative characters $\chi$ of order dividing $r$ and may be restricted to additive characters of order dividing the square-free part
 of $X^{n/l_i}-1$, which we denoted by $F_{l_i}'$.
 For the last sum we have
 \begin{align*}
  |S_{2,r}| 
  &\leq \sum_{\chi\neq\chi_0}\sum_{(\psi_1,\ldots,\psi_k)}
 \frac{1}{\phi(\ord(\chi))}\prod_{i=1}^k\frac{1}{\phi_{l_i}(\ord_{l_i}(\psi_i))}
 \left|\sum_{x\in\F_{q^n}} \psi_1\cdots\psi_k(x) \chi(x)  \right| \\
  &\leq q^{n/2} \sum_{\chi\neq\chi_0}\frac{1}{\phi(\ord(\chi))} \prod_{i=1}^k \sum_{\psi_i}\frac{1}{\phi_{l_i}(\ord_{l_i}(\psi_i))} \\
  &= q^{n/2} (W(r)-1) \prod_{i=1}^k W_{l_i}(F_{l_i}'),
 \end{align*}
 where we used the orthogonality relations and the well-known fact that for non-trivial $\chi$ and $\psi$ the absolute value of the Gauss sum $\sum_{x\in\F_{q^n}} \psi(x)\chi(x)$ is bounded by $q^{n/2}$. The result follows.
\qed\end{proof}
The following lemma is used to estimate $W(q')$, that appears above.
\begin{lemma}\label{lemma:w(r)}
For any $r\in\N$, $W(r) \leq c_{r,a} r^{1/a}$, where $c_{r,a}=2^s/(p_1 \cdots
p_s)^{1/a}$ and $p_1,\ldots ,p_s$ are the primes $\leq 2^a$ that divide $r$. In particular, $c_{r,4}<4.9$, $c_{r,12}<1.06\cdot 10^{24}$
 for all $r\in\N$.
\end{lemma}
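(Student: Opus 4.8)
The plan is to bound the divisor function $W(r) = \prod_{p \mid r} 2$ by a power $r^{1/a}$ times a correction factor that accounts for the small primes, where the bound $2 \leq p^{1/a}$ fails. First I would write $W(r) = \prod_{p \mid r} 2$ and $r^{1/a} \geq \prod_{p \mid r} p^{1/a}$ (using that $r^{1/a}$ is at least the $1/a$-th power of the radical of $r$), so that
\[
\frac{W(r)}{r^{1/a}} \leq \prod_{p \mid r} \frac{2}{p^{1/a}}.
\]
Now split the product over primes $p \mid r$ into those with $p \leq 2^a$ and those with $p > 2^a$. For a prime $p > 2^a$ we have $p^{1/a} > 2$, hence $2/p^{1/a} < 1$, so those factors only decrease the product and may be dropped. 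For the primes $p \leq 2^a$ dividing $r$ — call them $p_1, \dots, p_s$ — each contributes a factor $2/p_i^{1/a}$, and multiplying these gives exactly $2^s/(p_1 \cdots p_s)^{1/a} = c_{r,a}$. This yields $W(r)/r^{1/a} \leq c_{r,a}$, which is the claimed inequality.

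For the uniform bound $c_{r,a} < C_a$ valid for all $r$, I would observe that $c_{r,a}$ depends on $r$ only through the set of primes $\leq 2^a$ that divide it, and that the factor $2/p^{1/a}$ is $> 1$ precisely when $p < 2^a$ and $= 1$ when $p = 2^a$ (which cannot happen since $2^a$ is not prime for $a \geq 2$). Hence $c_{r,a}$ is maximized by taking $r$ divisible by \emph{all} primes strictly less than $2^a$, giving the uniform bound
\[
c_{r,a} \leq \prod_{\substack{p \text{ prime} \\ p < 2^a}} \frac{2}{p^{1/a}}.
\]
For $a = 4$, the primes below $16$ are $2,3,5,7,11,13$, so the bound is $2^6/(2 \cdot 3 \cdot 5 \cdot 7 \cdot 11 \cdot 13)^{1/4} = 64/30030^{1/4}$, and a direct numerical check gives this is less than $4.9$. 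For $a = 12$, one lists the primes below $4096$ and evaluates $\prod 2/p^{1/12}$ numerically to verify it is below $1.06 \cdot 10^{24}$; this is a finite (if lengthy) computation over the primes in $[2, 4095]$.

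The only real subtlety — rather than a genuine obstacle — is the numerical verification for $a = 12$: there are $\pi(4095) = 564$ primes to include, and one must be careful that the product of the $2/p^{1/12}$ factors, which grows while $p^{1/12} < 2$ (i.e., $p < 4096$) and each such factor exceeds $1$, indeed stays below the stated constant; this is handled by direct computation. Everything else is elementary: the key inequality is just the observation $2 \leq p^{1/a}$ for $p \geq 2^a$, applied prime by prime. I would present the argument in the order: (i) reduce to a product over primes dividing $r$; (ii) discard the large-prime factors as being $\leq 1$; (iii) identify the remaining product as $c_{r,a}$; (iv) bound $c_{r,a}$ uniformly by extending the product to all primes below $2^a$ and evaluating numerically for $a \in \{4, 12\}$.
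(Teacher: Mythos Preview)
Your argument is correct and essentially identical to the paper's: both reduce to square-free $r$ (you implicitly via $W(r)=\prod_{p\mid r}2$, the paper explicitly), split the prime factors at the threshold $2^a$, and use $2\leq p^{1/a}$ for the large primes to discard them. You spell out the maximization over all primes below $2^a$ for the uniform bounds $c_{r,4}<4.9$ and $c_{r,12}<1.06\cdot 10^{24}$, which the paper simply asserts ``can be easily computed.''
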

\begin{proof}
It is clear that it suffices to prove the above for $r$ square-free. Assume that $r=p_1\cdots p_s q_1\cdots q_t$, where $p_1,\ldots, p_s,q_1, \ldots ,q_t$ are distinct primes and $p_i\leq 2^a$ and $q_j>2^a$. We have that
\[ W(r) = 2^{s+t} = 2^s \cdot \underbrace{2\cdots 2}_{t \text{ times}} = 2^s (\underbrace{2^a \cdots 2^a}_{t \text{ times}})^{1/a} \leq 2^s (q_1\ldots q_t)^{1/a} = c_{r,a} r^{1/a} . \]
The bounds for $c_{r,a}$ can be easily computed.
\qed\end{proof}
\section{Completely normal elements}\label{sec:cn}
In this section, we prove a new lower bound for $\CN_q(n)$.
Let $p$ be the characteristic of $\F_q$ and $n=p^{\ell}m$, with $(m,p)=1$.
The number of elements of $\F_{q^n}$ that are {\it not} completely normal over $\F_{q}$ is at most
$\sum_{d|n}(q^n - \phi_d(X^{n/d}-1))$. Our starting point is the following bound.
\[
  \CN_q(n)\geq q^n \left(1-\sum_{d|n}\left(1-\frac{\phi_d(X^{n/d}-1)}{q^n}\right)\right).
\]
Expressing the divisors of $n$ as $p^jd$, $0\leq j\leq \ell$, $d\, |\, m$, we have
\begin{equation}
  \label{eq:b0}
  \CN_q(p^{\ell}m) \geq q^{p^{\ell}m} \left(1 - \sum_{j=0}^{\ell}\sum_{d|m}
  \left(1 - \frac{\phi_{p^jd}(X^{p^{\ell-j}m/d}-1)}{q^{p^{\ell}m}} \right)\right).
\end{equation}
We denote $\nu_{p^jd}(k)=\ord_k(q^{p^jd})$ and to simplify notation, we let $\nu(k)=\nu_1(k)$. Then
\[
  \phi_{p^jd}\left((X^{m/d}-1)^{p^{\ell-j}} \right)
  = q^{p^{\ell}m}\prod_{k|(m/d)} \left(1 - \frac{1}{q^{p^jd\nu_{p^jd}(k)}}\right)^{\frac{\phi(k)}{\nu_{p^jd}(k)}}.
\]
For $0\leq j\leq \ell$ and $d\mid m$ we have
\begin{eqnarray*}
  \phi_{p^jd}\left((X^{m/d}-1)^{p^{\ell-j}} \right)
  &\geq& q^{p^{\ell}m}\prod_{k|(m/d)} \left(1 - \frac{1}{q^{p^jd}}\right)^{\phi(k)} \\
  &\geq& q^{p^{\ell}m}\left(1 - \frac{1}{q^{p^jd}}\right)^{\frac{m}{d}} \\
  &\geq& q^{p^{\ell}m}\left(1 - \frac{m}{dq^{p^jd}}\right).
\end{eqnarray*}
Therefore,
\begin{equation} \label{eq:b1}
  1-\frac{\phi_{p^jd}\left((X^{m/d}-1)^{p^{\ell-j}} \right)}{q^{p^{\ell}m}}
  \leq  \frac{m}{dq^{p^jd}}, \ \ \mbox{ for } 0\leq j\leq \ell,\ d\, |\, m.
\end{equation}
This bound is sufficient for all pairs $(j,d)$, except for $j=0$ and $d=1$,
which we consider separately.
\[
  \phi_{1}\left((X^{m}-1)^{p^{\ell}} \right)
  = q^{p^{\ell}m}\prod_{k|m} \left(1 - \frac{1}{q^{\nu(k)}}\right)^{\frac{\phi(k)}{\nu(k)}}.
\]
Let $g=(m,q-1)$. Then $\nu(k)=1$ if and only if $q\equiv 1\pmod{k}$, which holds if and only if
$k\, |\, g$. We have
\begin{eqnarray*}
  \frac{\phi_{1}\left((X^{m}-1)^{p^{\ell}} \right)}{q^{p^{\ell}m}}
  &=&\prod_{k|g} \left(1 - \frac{1}{q}\right)^{\phi(k)} \
      \prod_{\substack{k|m \\ k\nmid g}}\left(1 - \frac{1}{q^{\nu(k)}}\right)^{\frac{\phi(k)}{\nu(k)}}.
\end{eqnarray*}
The first product is equal to $(1-1/q)^g$, while the second is bounded as follows.
\begin{eqnarray*}
  \prod_{\substack{k|m \\ k\nmid g}}\left(1 - \frac{1}{q^{\nu(k)}}\right)^{\frac{\phi(k)}{\nu(k)}}
  &\geq& \prod_{\substack{k|m \\ k\nmid g}}\left(1 - \frac{1}{q^{2}}\right)^{\frac{\phi(k)}{2}} \\
  &=& \prod_{k|m}\left(1 - \frac{1}{q^{2}}\right)^{\frac{\phi(k)}{2}}
      \prod_{k|g}\left(1 - \frac{1}{q^{2}}\right)^{-\frac{\phi(k)}{2}} \\
  &=& \left(1-\frac{1}{q^{2}}\right)^{\frac{m-g}{2}}.
\end{eqnarray*}
Therefore, we have
\begin{eqnarray*}
  \frac{\phi_{1}\left((X^{m}-1)^{p^{\ell}} \right)}{q^{p^{\ell}m}}
  &\geq& \left( 1-\frac{1}{q} \right)^g \left( 1 - \frac{1}{q^2} \right)^{\frac{m-g}{2}}\\
  &=& \left(1-\frac{1}{q}\right)^{\frac{g}{2}} \left(1+\frac{1}{q}\right)^{-\frac{g}{2}}
         \left(1-\frac{1}{q^{2}}\right)^{\frac{m}{2}} \\
  &=& \left(1-\frac{2}{q+1}\right)^{\frac{g}{2}} \left(1-\frac{1}{q^{2}}\right)^{\frac{m}{2}} \\
  &\geq& \left(1-\frac{g}{q+1}\right) \left(1-\frac{m}{2q^{2}}\right),
\end{eqnarray*}
and we get
\begin{equation}
  \label{eq:b2}
  1-\frac{\phi_{1}\left((X^{m}-1)^{p^{\ell}} \right)}{q^{p^{\ell}m}} \leq
  \frac{g}{q+1} + \frac{m}{2q^{2}} - \frac{gm}{2q^{2}(q+1)}.
\end{equation}

Combining Eqs.~(\ref{eq:b0}), (\ref{eq:b1}) and (\ref{eq:b2}), we obtain
\begin{equation}
\label{eq:4}
  \frac{\CN_q(p^{\ell}m)}{q^{p^{\ell}m}}
  \geq 1 - \frac{g}{q+1} - \frac{m}{2q^{2}} + \frac{gm}{2q^{2}(q+1)}
  - \sum_{\substack{d|m \\ d>1}} \sum_{j=0}^{\ell} \frac{m}{d q^{p^jd}} - \sum_{j=1}^{\ell} \frac{m}{q^{p^j}}.
\end{equation}
We proceed to upper bound the sums in the last expression.
\[
  \sum_{j=1}^{\ell} \frac{m}{q^{p^j}}
  \leq \frac{m}{q^p} + \sum_{j=2}^{\infty}\frac{m}{q^{pj}} 
  = \frac{m}{q^p} + \frac{m}{q^{2p}(1-q^{-p})},
\]
and
\begin{eqnarray*}
  \sum_{j=0}^{\ell}\frac{m}{dq^{p^jd}}
  &\leq& \frac{m}{d} \left(\frac{1}{q^d} + \sum_{j=1}^{\infty}\frac{1}{q^{pdj}} \right) \\
  &\leq& \frac{m}{d} \left(\frac{1}{q^d} + \frac{1}{q^{pd}(1-q^{-pd})} \right) \\
  &\leq& \frac{m}{d} \left(\frac{1}{q^{d}} + \frac{64}{63q^{pd}}\right),
\end{eqnarray*}
where we used the fact that $pd\geq 6$, therefore $1/(1-q^{-pd})\leq 64/63$.

We now consider two cases. For $m$ odd, we have
\begin{eqnarray*}
  \sum_{\substack{d|m \\ d>1}} \sum_{j=0}^{\ell}\frac{m}{dq^{p^jd}}
  &\leq& \sum_{\substack{d|m \\ d>1}}\frac{m}{dq^{d}} + \sum_{\substack{d|m \\ d>1}} \frac{64m}{63dq^{pd}} \\
  &\leq& \frac{m}{3q^3} + \sum_{d=5}^{\infty}\frac{m}{5q^{d}} + \sum_{d=3}^{\infty}\frac{64m}{63\cdot 3 q^{pd}} \\
  &\leq& \frac{m}{3q^3} + \frac{2m}{5q^5} + \frac{m}{2q^6}.
\end{eqnarray*}
Therefore,
\begin{equation}
\label{eq:2}
  \sum_{\substack{d|m \\ d>1}} \sum_{j=0}^{\ell}\frac{m}{dq^{p^jd}} + \sum_{j=1}^{\ell} \frac{m}{q^{p^j}}
  \leq \frac{m}{q^p} + \frac{m}{3q^3}  + \frac{2m}{q^4} + \frac{2m}{5q^5} + \frac{m}{2q^6}.
\end{equation}

For $m$ even, we have $p\geq 3$ and
\[
  \sum_{\substack{d|m \\ d>1}} \sum_{j=0}^{\ell}\frac{m}{dq^{p^jd}}
  \leq \sum_{\substack{d\geq 2}}\frac{m}{dq^{d}} + \sum_{d\geq 2} \frac{64m}{63dq^{pd}}.
\]
For the sums involved, we have
\[
\sum_{d\geq 2} \frac{m}{dq^d} \leq \frac{m}{2q^2} + \frac{m}{3q^3} + \frac{m}{4q^4} \sum_{d\geq 0} \frac{1}{q^d} \leq \frac{m}{2q^2} + \frac{m}{3q^3} + \frac{m}{2q^4} ,
\]
where we used the fact that $q/(q-1) < 2$. Similarly,
\[
\sum_{d\geq 2} \frac{64m}{63dq^{pd}} \leq \frac{64m}{63\cdot 2} \sum_{d\geq 2} \frac{1}{q^{3d}} \leq \frac{32}{63} \cdot \frac{m}{q^6} \cdot \frac{q^3}{q^3-1} \leq \frac{48m}{91q^6} ,
\]
since $q^3/(q^3-1)\leq 27/26$. We conclude that
\[
\sum_{\substack{d|m \\ d>1}} \sum_{j=0}^{\ell}\frac{m}{dq^{p^jd}}
  \leq \frac{m}{2q^2} + \frac{m}{3q^3} + \frac{m}{2q^4} + \frac{48m}{91q^6}.
\]
Therefore,
\begin{equation}
\label{eq:3}
  \sum_{\substack{d|m \\ d>1}} \sum_{j=0}^{\ell}\frac{m}{dq^{p^jd}} + \sum_{j=1}^{\ell} \frac{m}{q^{p^j}}
  \leq \frac{m}{2q^2} + \frac{4m}{3q^3} + \frac{m}{2q^4} + \frac{8m}{5q^6}.
\end{equation}
We are now ready to prove the following proposition.
\begin{proposition}\label{prop:cn-bound-1}
  Let $\F_q$ be the finite field of characteristic $p$, and $n=p^{\ell}m$, with $\ell\geq 1$, $m\geq 1$,
  $(m,p)=1$.
\begin{enumerate}
\item For $m$ even
  \[
    \CN_{q}(n)
    \geq q^n \left(1-\frac{g}{q+1}+\frac{gm}{2q^2(q+1)} -
      \frac{m}{q^2} - \frac{4m}{3q^3} - \frac{m}{2q^4} - \frac{8m}{5q^6} \right).
  \]
\item For $m$ odd
  \[
    \CN_{q}(n)
    \geq q^n \left(1-\frac{g}{q+1}+\frac{gm}{2q^2(q+1)} -
     \frac{m}{2q^2} - \frac{m}{q^p} - \frac{m}{3q^3} - \frac{2m}{q^4} - \frac{2m}{5q^5} - \frac{m}{2q^6}\right).
 \]
\end{enumerate}
\end{proposition}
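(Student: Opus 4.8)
The plan is to read off the Proposition directly from the chain of estimates assembled earlier in this section: no new ingredient is needed, only careful bookkeeping together with a split according to the parity of $m$. The backbone is the master inequality (\ref{eq:4}), which I would first rewrite in the form
\[
\frac{\CN_q(p^{\ell}m)}{q^{p^{\ell}m}}
\;\geq\; 1-\frac{g}{q+1}-\frac{m}{2q^2}+\frac{gm}{2q^2(q+1)}-\Sigma_1-\Sigma_2 ,
\]
where $\Sigma_1=\sum_{d\mid m,\,d>1}\sum_{j=0}^{\ell}\frac{m}{dq^{p^jd}}$ and $\Sigma_2=\sum_{j=1}^{\ell}\frac{m}{q^{p^j}}$ are precisely the aggregated error terms coming from the pairs $(j,d)\neq(0,1)$ through (\ref{eq:b1}), while $\frac{g}{q+1}$, $\frac{m}{2q^2}$ and $\frac{gm}{2q^2(q+1)}$ form the contribution of the pair $(0,1)$, handled separately in (\ref{eq:b2}) in terms of $g=(m,q-1)$. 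The remaining task is then to insert the upper bound for $\Sigma_1+\Sigma_2$ already proved above --- (\ref{eq:2}) in the odd case and (\ref{eq:3}) in the even case --- and to collect like powers of $q$.

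For $m$ odd I would substitute (\ref{eq:2}) into the displayed inequality; the term $-m/(2q^2)$ coming from (\ref{eq:4}) and the $p$-dependent term $-m/q^{p}$ coming from (\ref{eq:2}) are kept separate, since $p$ is not fixed, and adding the remaining summands of (\ref{eq:2}), namely $\frac{m}{3q^3}+\frac{2m}{q^4}+\frac{2m}{5q^5}+\frac{m}{2q^6}$, gives exactly part~(2). For $m$ even I would first observe that $(m,p)=1$ forces $p$ to be odd, hence $p\geq 3$ and $pd\geq 6$ for every divisor $d\geq 2$ of $m$ --- which is precisely the hypothesis under which (\ref{eq:3}) was derived --- and then substitute (\ref{eq:3}); here the contribution $-m/(2q^2)$ from (\ref{eq:4}) and the contribution $-m/(2q^2)$ sitting inside (\ref{eq:3}) (coming from the $d=2$ summand of $\sum_{d\geq 2}m/(dq^d)$) merge into a single $-m/q^2$, and the rest produces $-\frac{4m}{3q^3}-\frac{m}{2q^4}-\frac{8m}{5q^6}$, i.e., part~(1).

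There is essentially no obstacle left: every inequality being combined has already been established, so the proof amounts to term collection. The only points requiring a little care are the case distinction on the parity of $m$ and the tacit use, in the even case, of $p\geq 3$ (equivalently $pd\geq 6$ for $d\geq 2$), which is legitimate because $m$ even and $(m,p)=1$ exclude $p=2$. All the genuine analytic work --- the product formula for $\phi_{p^jd}((X^{m/d}-1)^{p^{\ell-j}})$, the estimate (\ref{eq:b1}), the sharper treatment (\ref{eq:b2}) of the pair $(0,1)$, and the geometric-tail summations behind (\ref{eq:2}) and (\ref{eq:3}) --- has already been carried out above.
\qed
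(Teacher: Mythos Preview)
Your proposal is correct and follows exactly the paper's own argument: combine the master inequality~(\ref{eq:4}) with the tail bound~(\ref{eq:3}) for $m$ even (merging the two $-m/(2q^2)$ terms) and with~(\ref{eq:2}) for $m$ odd, just as the paper does. Your added remark that $m$ even and $(m,p)=1$ force $p\geq 3$, which underlies the derivation of~(\ref{eq:3}), is a helpful clarification but not an additional step beyond what the paper uses.
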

\begin{proof}
For $m$ even, Eq.~\eqref{eq:4} combined with the bound in Eq.~\eqref{eq:3}, we have
\begin{eqnarray*}
  \frac{\CN_q(n)}{q^n} &\geq& 1 - \frac{g}{q+1} - \frac{m}{2q^2} + \frac{gm}{2q^2(q+1)} 
  - \frac{m}{2q^2} - \frac{4m}{3q^3} - \frac{m}{2q^4} - \frac{8m}{5q^6} \\
  &\geq& 1-\frac{g}{q+1}+\frac{gm}{2q^2(q+1)} -
      \frac{m}{q^2} - \frac{4m}{3q^3} - \frac{m}{2q^4} - \frac{8m}{5q^6} .
\end{eqnarray*}
The bound for $m$ odd follows similarly from Eqs.~\eqref{eq:4} and \eqref{eq:2}.\qed
\end{proof}
For $\ell=0$, that is, $(n,p)=1$, we have slightly tighter bounds.
\begin{proposition} \label{prop:cn-bound-2}
  Let $\F_q$ be the finite field of characteristic $p$, and $n\geq 1$, $(n,p)=1$.
\begin{enumerate}
\item For $n$ even
  \[
    \CN_{q}(n)
    \geq q^n \left(1-\frac{g}{q+1}+\frac{gn}{2q^2(q+1)} - \frac{n}{q^2} - \frac{n}{2q^3}\right).
  \]
\item For $n$ odd
  \[
    \CN_{q}(n)
    \geq q^n \left(1-\frac{g}{q+1}+\frac{gn}{2q^2(q+1)} -
     \frac{n}{2q^2}-\frac{n}{3q^3}-\frac{n}{2q^4} \right).
 \]
\end{enumerate}  
\end{proposition}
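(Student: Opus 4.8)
The plan is to run exactly the argument that produced Proposition~\ref{prop:cn-bound-1}, but now with $\ell=0$; this is what makes the constants sharper, since with no factor of $p$ in $n$ the divisors of $n$ no longer decompose as $p^j d$ and the inner sum over $j$ disappears entirely. Writing $n=m$ with $(n,p)=1$, I would start from the bound \eqref{eq:b0} specialised to $\ell=0$, that is
\[
  \CN_q(n)\ \geq\ q^n\left(1-\sum_{d\mid n}\left(1-\frac{\phi_d(X^{n/d}-1)}{q^n}\right)\right),
\]
and split off the term $d=1$. For $d>1$ the chain of inequalities used to derive \eqref{eq:b1} is precisely the case $j=\ell=0$ and applies verbatim, giving $1-\phi_d(X^{n/d}-1)/q^n\leq n/(d\,q^d)$; for $d=1$ the computation behind \eqref{eq:b2}, taken with $\ell=0$ and $m=n$, gives $1-\phi_1(X^{n}-1)/q^n\leq g/(q+1)+n/(2q^2)-gn/(2q^2(q+1))$, where $g=(n,q-1)$. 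Substituting both into the display above collapses everything to
\[
  \frac{\CN_q(n)}{q^n}\ \geq\ 1-\frac{g}{q+1}+\frac{gn}{2q^2(q+1)}-\frac{n}{2q^2}-\sum_{\substack{d\mid n\\ d>1}}\frac{n}{d\,q^d},
\]
so it only remains to bound the tail $T:=\sum_{d\mid n,\,d>1}n/(d\,q^d)$ for each parity of $n$.

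For $n$ odd every divisor $d>1$ of $n$ is at least $3$, so
\[
  T\ \leq\ \sum_{d\geq 3}\frac{n}{d\,q^d}\ =\ \frac{n}{3q^3}+\frac{n}{4q^4}+\sum_{d\geq 5}\frac{n}{d\,q^d}\ \leq\ \frac{n}{3q^3}+\frac{n}{4q^4}+\frac{n}{5q^4(q-1)};
\]
since $q\geq 2$ the last two terms sum to at most $n/(2q^4)$, and substituting into the previous display produces exactly the odd-$n$ bound. For $n$ even the hypothesis $(n,p)=1$ forces $p$ odd, hence $q\geq 3$; the divisors $d>1$ of $n$ now run over $d\geq 2$, so
\[
  T\ \leq\ \sum_{d\geq 2}\frac{n}{d\,q^d}\ =\ \frac{n}{2q^2}+\frac{n}{3q^3}+\sum_{d\geq 4}\frac{n}{d\,q^d}\ \leq\ \frac{n}{2q^2}+\frac{n}{3q^3}+\frac{n}{4q^3(q-1)},
\]
and now $q\geq 3$ makes the last two terms at most $n/(2q^3)$; substituting gives the even-$n$ bound.

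There is no genuine obstacle here: the whole thing is bookkeeping with geometric series, built on the same elementary estimates already used for Proposition~\ref{prop:cn-bound-1}. The two points that need a moment's care are remembering that $n$ even together with $(n,p)=1$ forces $q\geq 3$ --- which is exactly what lets the tail collapse to the advertised $n/(2q^3)$ --- and choosing where to truncate each geometric series, namely after $d=4$ when $n$ is odd and after $d=3$ when $n$ is even, so that the leftover lands precisely on the constants in the statement.
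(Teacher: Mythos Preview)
Your argument is correct and follows essentially the same route as the paper: specialise the analysis to $\ell=0$ to reach
\[
  \frac{\CN_q(n)}{q^n}\ \geq\ 1-\frac{g}{q+1}+\frac{gn}{2q^2(q+1)}-\frac{n}{2q^2}-\sum_{\substack{d\mid n\\ d>1}}\frac{n}{d\,q^d},
\]
and then bound the tail by a geometric series according to parity. The paper simply records this display and the final bound without spelling out the intermediate arithmetic; your explicit truncation (after $d=4$ for $n$ odd, after $d=3$ for $n$ even) and your observation that $(n,p)=1$ with $n$ even forces $q\geq 3$ make the same constants fall out.
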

\begin{proof}
  For the first bound,
  \begin{eqnarray*}
    \CN_{q}(n)
    &\geq& q^n \left(1-\frac{g}{q+1}+\frac{gn}{2q^2(q+1)}-\frac{n}{2q^2}-\sum_{\substack{d|n \\ d>1}}\frac{n}{dq^d} \right) \\
    &\geq& q^n \left(1-\frac{g}{q+1}+\frac{gn}{2q^2(q+1)}-\frac{n}{q^2}-\frac{n}{2q^3} \right)
  \end{eqnarray*}
  For the second bound,
  \begin{eqnarray*}
    \CN_{q}(n)
    &\geq& q^n \left(1-\frac{g}{q+1}+\frac{gn}{2q^2(q+1)}-\frac{n}{2q^2}-\sum_{\substack{d|n \\ d>1}}\frac{n}{dq^d} \right) \\
    &\geq& q^n \left(1-\frac{g}{q+1}+\frac{gn}{2q^2(q+1)}-\frac{n}{2q^2}-\frac{n}{3q^3}-\frac{n}{2q^4} \right).
  \end{eqnarray*}
\qed\end{proof}

\begin{corollary}\label{coro:cn-bound-1}
  Let $\F_q$ be the finite field of characteristic $p$, and $n=p^{\ell}m$, with $\ell\geq 1$, $1\leq m< 2q^2$,
  $(m,p)=1$, and $(q-1)\nmid m$.
\begin{enumerate}
\item For $m$ even, $q\geq 9$ 
  \[
    \CN_{q}(n) \geq q^n \left(\frac{1}{2} + \frac{1}{q+1} - \frac{0.96\cdot m}{q^2} \right).
  \]
\item For $m$ odd, $q\geq 8$ and $p=2$
  \[
    \CN_{q}(n) \geq q^n \left(\frac{2}{3} + \frac{2}{3(q+1)} - \frac{1.45\cdot m}{q^2}\right).
  \]
\end{enumerate}
\end{corollary}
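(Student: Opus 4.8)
The plan is to deduce the corollary from Proposition~\ref{prop:cn-bound-1} by inserting the extremal value of $g=(m,q-1)$ and then reducing the resulting inequality to a one-variable estimate in $q$.

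\emph{Step 1: bounding $g$.} Since $g\mid q-1$ and the hypothesis $(q-1)\nmid m$ rules out $g=q-1$, the integer $g$ is a \emph{proper} divisor of $q-1$, hence $g\le (q-1)/s$ where $s$ is the least prime factor of $q-1$. In case~(1), $m$ is even and $(m,p)=1$, so $p$ is odd, $q$ is odd, $q-1$ is even, and $g\le (q-1)/2$. In case~(2), $p=2$, so $q-1$ is odd, its least prime factor is at least $3$, and $g\le (q-1)/3$. It is precisely here that the hypothesis $p=2$ is used, and it is what yields the better leading constant $2/3$.

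\emph{Step 2: monotonicity in $g$.} For fixed $q$ and $m$, the lower bound of Proposition~\ref{prop:cn-bound-1} is an affine function of $g$ whose coefficient of $g$ equals $\frac{1}{q+1}\bigl(\frac{m}{2q^2}-1\bigr)$, which is negative because $m<2q^2$. Hence the bound is smallest at the largest admissible $g$, so it suffices to check the asserted inequalities after replacing $g$ by $(q-1)/2$ in case~(1) and by $(q-1)/3$ in case~(2). A direct simplification gives $1-\frac{(q-1)/2}{q+1}=\frac12+\frac1{q+1}$ and $1-\frac{(q-1)/3}{q+1}=\frac23+\frac{2}{3(q+1)}$, which are exactly the leading terms in the two displayed bounds. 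What remains is to show that the surviving error terms — $\frac{gm}{2q^2(q+1)}$ at the extremal $g$ together with $-\frac{4m}{3q^3}-\frac{m}{2q^4}-\frac{8m}{5q^6}$ in case~(1), and together with $-\frac{m}{q^p}-\frac{m}{3q^3}-\frac{2m}{q^4}-\frac{2m}{5q^5}-\frac{m}{2q^6}$ (recalling that $q^p=q^2$, so $-m/q^p$ merges with $-m/(2q^2)$) in case~(2) — are dominated by $-0.96\,m/q^2$, resp. $-1.45\,m/q^2$.

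\emph{Step 3: reduction to a base case, and the main obstacle.} Dividing through by $m>0$ and multiplying by $q^2$, each case reduces to an inequality of the shape $\frac{q-1}{c(q+1)}\ge \delta+\frac{a_1}{q}+\frac{a_2}{q^2}+\cdots$ with explicit small rational constants. The left-hand side increases with $q$ and every term on the right decreases with $q$, so it suffices to verify the inequality at the smallest admissible prime power, namely $q=9$ in case~(1) and $q=8$ in case~(2). The only genuine obstacle is bookkeeping: one must confirm that the constants $0.96$ and $1.45$ — kept tight enough to be useful in the subsequent applications — still leave positive slack at these base cases, which the numerics do (the margin is of order $5\cdot 10^{-3}$ in each case). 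This is exactly what forces the thresholds $q\ge 9$ and $q\ge 8$ appearing in the statement.
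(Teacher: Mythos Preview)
Your proof is correct and follows essentially the same approach as the paper: bound $g$ by $(q-1)/2$ (resp.\ $(q-1)/3$) using that $g$ is a proper divisor of $q-1$, use the monotonicity in $g$ coming from $m<2q^2$, substitute the extremal value, and verify the resulting numerical inequality at the threshold $q=9$ (resp.\ $q=8$). Your explicit reduction to the base case via monotonicity in $q$ makes transparent what the paper leaves implicit in its final ``$\geq$'' step.
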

\begin{proof}
  We proceed to prove the first bound. In the RHS expressions of the inequalities of Proposition~\ref{prop:cn-bound-1}, the quantity
  \[ - \frac{g}{q+1} + \frac{gm}{2q^2(q+1)} = -\frac{g}{q+1} \left( 1 - \frac{m}{2q^2} \right) \]
  is a decreasing function of $g$, since $m<2q^2$. Assuming that $g\leq (q-1)/2$, we have
  \begin{eqnarray*}
    & & 1-\frac{g}{q+1}+\frac{gm}{2q^2(q+1)} -
        \frac{m}{q^2} - \frac{4m}{3q^3} - \frac{m}{2q^4} - \frac{8m}{5q^6} \\
    &\geq& 1-\frac{q-1}{2(q+1)}+\frac{(q-1)m}{4q^2(q+1)} -
           \frac{m}{q^2} - \frac{4m}{3q^3} - \frac{m}{2q^4} - \frac{8m}{5q^6} \\
    &=& \frac{1}{2} + \frac{1}{q+1} -
        \frac{m}{q^2} \left(-\frac{q-1}{4(q+1)}+1+\frac{4}{3q}+\frac{1}{2q^2}+\frac{8}{5q^4}\right) \\
    &\geq& \frac{1}{2} + \frac{1}{q+1} - \frac{0.96\cdot m}{q^2}.        
  \end{eqnarray*}
  For the second bound, letting $g\leq (q-1)/3$, we have
  \begin{eqnarray*}
    & & 1-\frac{g}{q+1}+\frac{gm}{2q^2(q+1)} -
        \frac{m}{2q^2} - \frac{m}{q^p} - \frac{m}{3q^3} - \frac{2m}{q^4} - \frac{2m}{5q^5} - \frac{m}{2q^6} \\
    &\geq& 1-\frac{g}{q+1}+\frac{gm}{2q^2(q+1)} -
        \frac{3m}{2q^2} - \frac{m}{3q^3} - \frac{2m}{q^4} - \frac{2m}{5q^5} - \frac{m}{2q^6} \\
    &\geq& 1- \frac{q-1}{3(q+1)} + \frac{(q-1)m}{6q^2(q+1)} -
           \frac{3m}{2q^2} - \frac{m}{3q^3} - \frac{2m}{q^4} - \frac{2m}{5q^5} - \frac{m}{2q^6} \\
    &=& \frac{2}{3} + \frac{2}{3(q+1)} - \frac{m}{q^2}
        \left(-\frac{q-1}{6(q+1)} + \frac{3}{2} + \frac{1}{3q} + \frac{2}{q^2}
        + \frac{2}{5q^3}+\frac{1}{2q^4}\right) \\
    &\geq &\frac{2}{3} + \frac{2}{3(q+1)} - \frac{1.45\cdot m}{q^2} 
  \end{eqnarray*}
\qed\end{proof}

\begin{corollary}\label{coro:cn-bound-2}
  Let $\F_q$ be of characteristic $p$, $n=p^{\ell}m$, with $\ell\geq 1$,
  $(m,p)=1$. Assume that $m< 2q^2$ is odd, $p\geq 3$ and $q\geq 9$. Then
    \[
      \CN_{q}(n) \geq q^n \left(\frac{2}{q+1} - \frac{2.735\cdot m}{q^2(q+1)} \right).
    \]
\end{corollary}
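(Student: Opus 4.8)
The plan is to specialise the bound of Proposition~\ref{prop:cn-bound-1}(2), which applies since $m$ is odd, and to extract from the hypotheses the one saving that carries the argument: because $m$ is odd while $p$, hence $q$, is odd, the integer $g=(m,q-1)$ is an odd divisor of the \emph{even} number $q-1$, so $(q-1)/g$ is even and $g\le (q-1)/2$. Since $m<2q^2$, the quantity $-\frac{g}{q+1}+\frac{gm}{2q^2(q+1)}=-\frac{g}{q+1}\bigl(1-\frac{m}{2q^2}\bigr)$ is decreasing in $g$, so replacing $g$ by $(q-1)/2$ in Proposition~\ref{prop:cn-bound-1}(2) still gives a valid lower bound; using $1-\frac{q-1}{2(q+1)}=\frac12+\frac1{q+1}$ this yields
\[
\frac{\CN_q(n)}{q^n}\;\ge\;\frac12+\frac1{q+1}+\frac{(q-1)m}{4q^2(q+1)}-\frac{m}{2q^2}-\frac{m}{q^p}-\frac{m}{3q^3}-\frac{2m}{q^4}-\frac{2m}{5q^5}-\frac{m}{2q^6}.
\]

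Next I would invoke $p\ge3$ to replace $q^p$ by $q^3$, collect the $m$-terms, and use $\frac{q-1}{4(q+1)}=\frac14-\frac1{2(q+1)}$ to rewrite the estimate in the form $\CN_q(n)/q^n\ge\frac12+\frac1{q+1}-\frac{m}{q^2}\,Q(q)$, where
\[
Q(q)=\frac14+\frac1{2(q+1)}+\frac4{3q}+\frac2{q^2}+\frac2{5q^3}+\frac1{2q^4}.
\]
Subtracting the target $\frac2{q+1}-\frac{2.735\,m}{q^2(q+1)}$ and noting $\frac12+\frac1{q+1}-\frac2{q+1}=\frac{q-1}{2(q+1)}\ge0$, the corollary reduces to the inequality $\frac{m}{q^2}\bigl(Q(q)-\frac{2.735}{q+1}\bigr)\le\frac{q-1}{2(q+1)}$. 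Here $Q(q)-\frac{2.735}{q+1}=\frac14-\frac{2.235}{q+1}+\frac4{3q}+\frac2{q^2}+\frac2{5q^3}+\frac1{2q^4}>0$ for $q\ge9$, so the left side is increasing in $m$; since $m<2q^2$ makes $\frac{m}{q^2}<2$, it is enough to prove $2Q(q)-\frac{5.47}{q+1}\le\frac{q-1}{2(q+1)}$, which simplifies to
\[
\frac8{3q}+\frac4{q^2}+\frac4{5q^3}+\frac1{q^4}\;\le\;\frac{3.47}{q+1}.
\]
Clearing denominators, this is $100q^3+72q^2+27q+15\le12.05\,q^4$, equivalently $100+\frac{72}{q}+\frac{27}{q^2}+\frac{15}{q^3}\le12.05\,q$; the left side decreases and the right side increases with $q$, so it suffices to check it at $q=9$, where it holds.

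The main obstacle is the tightness of the estimates: the constant $2.735$ is essentially forced, since the last displayed inequality is an equality at $q=9$ to about four decimal places (and $q=9$ is the smallest admissible value). Hence none of the steps may be slack — one must keep $g\le(q-1)/2$ rather than the cruder $g\le q-1$, must not discard the positive term $\frac{(q-1)m}{4q^2(q+1)}$, and must genuinely use $p\ge3$ both to bound $g$ and to pass from $q^p$ to $q^3$. Beyond this careful bookkeeping the argument is entirely elementary.
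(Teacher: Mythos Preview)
Your argument is correct, but it takes a more circuitous route than the paper's, and your closing remark that ``one must keep $g\le(q-1)/2$ rather than the cruder $g\le q-1$'' is mistaken.

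The paper simply takes $g\le q-1$ (no parity saving), so that $1-\tfrac{q-1}{q+1}=\tfrac{2}{q+1}$ matches the target constant term immediately. After using $p\ge3$ to merge $m/q^p$ with $m/(3q^3)$ into $4m/(3q^3)$, it factors the $m$-terms over $q^2(q+1)$ and obtains
\[
\frac{\CN_q(n)}{q^n}\ \ge\ \frac{2}{q+1}-\frac{m}{q^2(q+1)}\Bigl(1+\tfrac{4(q+1)}{3q}+\tfrac{2(q+1)}{q^2}+\tfrac{2(q+1)}{5q^3}+\tfrac{q+1}{2q^4}\Bigr),
\]
and the bracketed quantity is decreasing in $q$ with value $\approx 2.7346$ at $q=9$. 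Done.

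Your sharper bound $g\le(q-1)/2$ yields a genuinely stronger intermediate inequality (main term $\tfrac12+\tfrac1{q+1}$), but since the corollary's target has main term $\tfrac2{q+1}$, you then have to trade back the gain against the looser $m$-coefficient, invoking the hypothesis $m<2q^2$ a second time to close the comparison. That works, and your final numerical check at $q=9$ is accurate, but the detour is avoidable: the paper's choice $g\le q-1$ lands directly on the desired form and needs only the single monotonicity use of $m<2q^2$. So the constant $2.735$ is forced at $q=9$, as you observed, but not because $g\le(q-1)/2$ is required.
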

\begin{proof}
  The proof is very similar to that of Corollary~\ref{coro:cn-bound-1}.
  We compute
  \begin{align*}
     & 1-\frac{g}{q+1}+\frac{gm}{2q^2(q+1)} -
        \frac{m}{2q^2} - \frac{m}{q^p} - \frac{m}{3q^3} - \frac{2m}{q^4} - \frac{2m}{5q^5} - \frac{m}{2q^6} \\
    \geq& 1-\frac{q-1}{q+1}+\frac{(q-1)m}{2q^2(q+1)} -
           \frac{m}{2q^2} - \frac{4m}{3q^3} - \frac{2m}{q^4} - \frac{2m}{5q^5} - \frac{m}{2q^6} \\
    =& \frac{2}{q+1} - \frac{m}{q^2(q+1)}
        \left(-\frac{q-1}{2}+\frac{q+1}{2}+\frac{4(q+1)}{3q}+\frac{2(q+1)}{q^2}+\frac{2(q+1)}{5q^3}+\frac{q+1}{2q^4} \right)\\
    \geq& \frac{2}{q+1} - \frac{2.735\cdot m}{q^2(q+1)} .
  \end{align*}
\qed\end{proof}
The next corollary follows similarly from Proposition~\ref{prop:cn-bound-2}.
\begin{corollary} \label{coro:cn-bound-0}
  Let $\F_q$ be of characteristic $p$, and $1\leq n< 2q^2$, $(n,p)=1$.
  \begin{enumerate}
  \item For $n$ even, $q\geq 9$ and $q-1 \nmid n$,
    \[
      \CN_q(n) \geq q^n \left(\frac{1}{2} + \frac{1}{q+1} - \frac{0.86\cdot n}{q^2} \right) .
    \]
  \item For $n$ odd, $q\geq 8$
    \[
      \CN_q(n) \geq q^n \left(\frac{2}{q+1}-\frac{1.45\cdot n}{q^2(q+1)} \right).
    \]
  \end{enumerate}
\end{corollary}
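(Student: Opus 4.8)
The plan is to imitate the proofs of Corollaries~\ref{coro:cn-bound-1} and~\ref{coro:cn-bound-2}: we start from Proposition~\ref{prop:cn-bound-2} (the case $\ell=0$, where $n=m$) and substitute the worst admissible value of $g=(n,q-1)$ into its two bounds. As in those proofs, the structural point is that the $g$-dependent part of each bound rewrites as
\[
-\frac{g}{q+1}+\frac{gn}{2q^2(q+1)} = -\frac{g}{q+1}\left(1-\frac{n}{2q^2}\right),
\]
which, because $n<2q^2$, is a decreasing function of $g$. Hence replacing $g$ by any upper bound for it keeps the inequality valid, and the only further input needed is such a bound together with a routine estimate of the resulting coefficient of $n/q^2$.

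For the first statement I would argue as follows. Since $q-1\nmid n$, the integer $g$ is a \emph{proper} divisor of $q-1$, so $g\le(q-1)/2$ (here $q\ge 9$ ensures $q-1\ge 2$, and the largest proper divisor of any integer $\ge 2$ is at most half of it). Substituting $g=(q-1)/2$ into the first bound of Proposition~\ref{prop:cn-bound-2} and using $1-\frac{q-1}{2(q+1)}=\frac12+\frac1{q+1}$, the right-hand side becomes
\[
q^n\left(\frac12+\frac1{q+1}-\frac{n}{q^2}\left(1+\frac{1}{2q}-\frac{q-1}{4(q+1)}\right)\right),
\]
so it remains to check that the parenthesised coefficient is at most $0.86$ for $q\ge 9$; it is a decreasing function of $q$ and equals $1+\frac1{18}-\frac15<0.856$ at $q=9$.

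For the second statement only the crude bound $g\le q-1$ is available (and it can be attained, e.g.\ when $q$ is even and $q-1\mid n$), so I would substitute $g=q-1$ into the second bound of Proposition~\ref{prop:cn-bound-2}. Using $1-\frac{q-1}{q+1}=\frac2{q+1}$ and collecting terms over the common denominator $q^2(q+1)$, the right-hand side becomes
\[
q^n\left(\frac2{q+1}-\frac{n}{q^2(q+1)}\left(1+\frac{q+1}{3q}+\frac{q+1}{2q^2}\right)\right),
\]
and it suffices to verify $1+\frac{q+1}{3q}+\frac{q+1}{2q^2}\le 1.45$ for $q\ge 8$; clearing denominators this is equivalent to $5q+3\le 0.7\,q^2$, which holds at $q=8$ (namely $43\le 44.8$) and improves for larger $q$.

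The argument is essentially mechanical; the one place demanding care --- and the one that actually fixes the numerical thresholds in the statement --- is this final estimate. The inequality $5q+3\le 0.7q^2$ is tight at $q=8$, which is precisely why the odd case assumes $q\ge 8$ and why the constant cannot be lowered much below $1.45$ without shrinking the admissible range of $q$; an analogous, marginally looser, phenomenon pins down $q\ge 9$ and the constant $0.86$ in the even case.
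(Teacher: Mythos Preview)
Your proof is correct and follows essentially the same route as the paper: start from Proposition~\ref{prop:cn-bound-2}, observe that the $g$-dependent part is decreasing in $g$ since $n<2q^2$, substitute the worst admissible $g$ (namely $(q-1)/2$ in the even case, $q-1$ in the odd case), and bound the resulting numerical coefficient at the smallest permitted $q$. The paper only writes out item~1 explicitly, and your algebraic form of the coefficient, $1+\tfrac{1}{2q}-\tfrac{q-1}{4(q+1)}$, is identical to the paper's $\tfrac34+\tfrac{1}{2(q+1)}+\tfrac{1}{2q}$; your treatment of item~2 fills in what the paper leaves implicit.
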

\begin{proof}
For the first item, in Proposition~\ref{prop:cn-bound-2}, we assume that $g\leq (q-1)/2$, hence
\begin{align*}
\CN_q(n) & \geq q^n \left( 1 - \frac{q-1}{2(q+1)} + \frac{(q-1)n}{4q^2(q_1)} - \frac{n}{q^2} - \frac{n}{2q^3} \right) \\
 & = q^n \left( \frac 12 + \frac{1}{q+1} - \frac{n}{q^2} \left( \frac 34 + \frac{1}{2(q+1)} + \frac{1}{2q} \right) \right) \\
 & \geq q^n \left(\frac{1}{2} + \frac{1}{q+1} - \frac{0.86\cdot n}{q^2} \right) .
\end{align*}
\qed\end{proof}
\begin{theorem}\label{thm:cn-bound-final}
  Let $\F_q$ be of characteristic $p$, $n\in\N$ odd and $q\geq 8$. Then
  \begin{enumerate}
  \item For $n$ odd, $q\geq 8$,
    \begin{equation}
      \label{eq:cn-bound-odd}
      \CN_q(n) \geq q^n \left(\frac{2}{q+1}-\frac{1.45\cdot n}{q^2(q+1)} \right).
    \end{equation}
  \item For $n$ even, $q\geq 9$, $q-1\nmid n$,
    \begin{equation}
      \label{eq:cn-bound-even}
      \CN_q(n)\geq q^n\left( \frac{1}{2} + \frac{1}{q+1} - \frac{0.86\cdot n}{q^2} \right).
    \end{equation}
  \end{enumerate}
\end{theorem}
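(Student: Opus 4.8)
The plan is to prove both bounds by a case analysis on the $p$-adic part of $n$, feeding each case into the appropriate corollary of this section. Write $n=p^{\ell}m$ with $(m,p)=1$ and set $g=\gcd(m,q-1)$. Since $q$ is a power of $p$ we have $\gcd(q-1,p)=1$, hence $\gcd(q-1,p^{\ell})=1$ and therefore $q-1\mid n$ if and only if $q-1\mid m$; this lets the hypothesis $q-1\nmid n$ of the even case descend to $m$. I would first dispose of the range $m\geq 2q^2$: there $n\geq m\geq 2q^2$, so $1.45\,n/q^2\geq 2.9>2$ and (for $q\geq 9$) $0.86\,n/q^2\geq 1.72>\tfrac12+\tfrac1{q+1}$, making both claimed right-hand sides $\leq 0$; since $\CN_q(n)\geq 0$ the inequalities are vacuous in that range. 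Hence we may assume $m<2q^2$, which is exactly the regime in which the corollaries of this section apply.

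For item~1 ($n$ odd): if $\ell=0$ then $(n,p)=1$ and the claim is precisely Corollary~\ref{coro:cn-bound-0}(2). If $\ell\geq 1$ then $n$ odd forces $p\geq 3$, hence $q\geq 9$ (as $q\geq 8$), and $m$ is odd; Corollary~\ref{coro:cn-bound-2} then gives $\CN_q(n)\geq q^n\bigl(\tfrac2{q+1}-\tfrac{2.735\,m}{q^2(q+1)}\bigr)$, and since $n=p^{\ell}m\geq 3m$ we have $1.45\,n\geq 4.35\,m\geq 2.735\,m$, yielding \eqref{eq:cn-bound-odd}.

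For item~2 ($n$ even, $q\geq 9$, $q-1\nmid n$, hence $q-1\nmid m$): if $\ell=0$ this is Corollary~\ref{coro:cn-bound-0}(1). If $\ell\geq 1$ and $p\geq 3$ then $m$ is even and Corollary~\ref{coro:cn-bound-1}(1) gives $\CN_q(n)\geq q^n\bigl(\tfrac12+\tfrac1{q+1}-\tfrac{0.96\,m}{q^2}\bigr)$; since $n\geq 3m$ we get $0.86\,n\geq 2.58\,m\geq 0.96\,m$, which is \eqref{eq:cn-bound-even}. If $\ell\geq 1$ and $p=2$ then $m$ is odd and Corollary~\ref{coro:cn-bound-1}(2) gives $\CN_q(n)\geq q^n\bigl(\tfrac23+\tfrac2{3(q+1)}-\tfrac{1.45\,m}{q^2}\bigr)$; comparing with the target, the leading-term difference $\tfrac16-\tfrac1{3(q+1)}$ is $\geq 0$ for $q\geq 8$ and, since $n\geq 2m$, $0.86\,n\geq 1.72\,m\geq 1.45\,m$, so the two slacks together give \eqref{eq:cn-bound-even}.

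The one point requiring care is purely organizational: in each case one must verify that the side hypotheses of the corollary being invoked ($m<2q^2$, the correct parity of $m$, the dichotomy $p=2$ versus $p\geq 3$, and the lower bounds $q\geq 8$ or $q\geq 9$) are genuinely satisfied — in particular that $\ell\geq 1$ with $n$ odd forces $q\geq 9$, and that $q-1\nmid n$ passes to $m$. No new estimates are needed beyond the elementary comparisons $p^{\ell}\geq 3$ (when $p\geq 3$) and $2^{\ell}\geq 2$, which is why this is the least delicate result of the section.
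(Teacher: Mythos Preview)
Your proof is correct and follows essentially the same case analysis as the paper: reduce to the range where the corollaries apply, then in each parity/$p$-adic case invoke the matching corollary and compare constants using $n\geq p^{\ell}m$. The only cosmetic differences are that the paper reduces to $n<2q^2$ rather than $m<2q^2$ (equivalent here, since $m\leq n$) and in the odd $\ell\geq 1$ case records the intermediate constant $0.92$ from $2.735/3$ before relaxing to $1.45$.
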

\begin{proof}
We first note that we may assume that $n< 2q^2$, since otherwise the bounds hold trivially.
  For the bound in Eq.~\eqref{eq:cn-bound-odd}, let $n=p^{\ell}m$, with $(m,p)=1$. First we consider the case $\ell\geq 1$. In
  this case, $p\geq 3$, $n\geq 3m$ and Corollary~\ref{coro:cn-bound-2} implies that
  \[
    \CN_q(n)\geq q^n\left(\frac{2}{q+1}-\frac{0.92\cdot n}{q^2(q+1)} \right).
  \]
  Suppose now that $\ell=0$, so that $(n,p)=1$. In this case, the stated bound
  is that of Corollary~\ref{coro:cn-bound-0}.
  
  For the bound in Eq.~\eqref{eq:cn-bound-even}, we first consider the case $p=2$, that is $n=2^\ell m$, $\ell\geq 1$, $m$ odd. In this case, from Corollary~\ref{coro:cn-bound-1},
  \[
  \CN_q(n) \geq q^n \left( \frac{2}{3} + \frac{2}{3(q+1)} - \frac{1.45\cdot m}{q^2} \right) \geq
  q^n \left( \frac{2}{3} + \frac{2}{3(q+1)} - \frac{0.725\cdot n}{q^2} \right) .
  \]
  Next, we consider the case $p\geq 3$ and $n=p^\ell m$, with $\ell\geq 1$ and $m$ even. From Corollary~\ref{coro:cn-bound-1},
  \[
  \CN_q(n) \geq q^n \left( \frac 12 + \frac{1}{q+1} - \frac{0.96\cdot m}{q^2} \right) \geq q^n \left( \frac 12 + \frac{1}{q+1} - \frac{0.32\cdot n}{q^2} \right) .
  \]
  Finally, we consider the case $p\geq 3$, $n$ even and $(n,p)=1$. From Corollary~\ref{coro:cn-bound-0}, we have
  \[
  \CN_q(n) \geq q^n \left(\frac{1}{2} + \frac{1}{q+1} - \frac{0.86\cdot n}{q^2} \right) .
  \]
  One easily checks that among the last three bounds, the latter is the weakest. 
\qed\end{proof}
\section{Proof of Theorem~\ref{thm:our_asymptotic}}\label{sec:proof1}
From Theorem~\ref{thm:our2}, we get $\PCN_q(n)>0$ provided that
\begin{equation}\label{eq:ip_pcn1}
\CN_q(n) > q^{n/2} W(q') \prod_{i=1}^k W_{l_i}(F_{l_i}') \theta_{l_i}(F_{l_i}').
\end{equation}
Clearly, $\theta_{l_i}(F_{l_i}')<1$ for all $i$ and $W_{l_i}(F_{l_i}') \leq 2^{n/l_i}$, so we have that
\begin{equation}\label{eq:ip_pcn2}
\prod_{i=1}^k W_{l_i}(F_{l_i}') \theta_{l_i}(F_{l_i}') < 2^{\sum_{i=1}^k n/l_i} = 2^{t(n)-1},
\end{equation}
where $t$ stands for the sum-of-divisors function. We now consider the case $n$ odd.
From Theorem~\ref{thm:cn-bound-final}, Lemma~\ref{lemma:w(r)} and Eqs.~\eqref{eq:ip_pcn1} and \eqref{eq:ip_pcn2},
we obtain the sufficient condition
\[
q^{n/4} \left( \frac{2}{q+1}-\frac{1.45\cdot n}{q^2(q+1)} \right) \geq 4.9 \cdot 2^{t(n)-1} .
\]
By Robin's theorem \cite{robin84},
\[
t(n) \leq e^\gamma n\log\log n + \frac{0.6483n}{\log\log n} , \ \forall n\geq 3 ,
\]
where $\gamma$ is the Euler-Mascheroni constant, hence the latter becomes
\begin{equation}\label{eq:ip_even_1}
q^{n/4} \left(\frac{2}{q+1}-\frac{1.45\cdot n}{q^2(q+1)} \right) \geq 4.9 \cdot 2^{n\left( \log\log n\cdot e^{0.578}+ \frac{0.6483}{\log\log n}\right)-1}.
\end{equation}
Assuming $n\geq 285$, a simple calculation shows that
\[
  4.9 \cdot 2^{n\left( \log\log n\cdot e^{0.578}+ \frac{0.6483}{\log\log n}\right)-1}
  \leq 2.5\cdot 2^{2n\log\log n},
\]
and since $n\leq q^2$ we obtain the condition
\begin{equation}\label{eq:suff-cond}
  \frac{0.55 \cdot q^{n/4}}{q+1} \geq 2.5 \cdot 2^{2n\log\log n}.
\end{equation}
Since $q^2\geq n\geq 285$, we have $q\geq 16$, so that $q+1\leq 1.0625\cdot q$ and the condition becomes
\[
  q^{n/4-1} \geq 4.83\cdot 4^{n\log\log n}.
\]
Since $q\geq \sqrt{n}$, it suffices
\[
  \frac{n-4}{8} \log n \geq n\log\log n \cdot \log 4 + \log 4.83,
\]
which is true for $n$ large enough.

For $n$ even, a similar argument leads to the sufficient condition
  \[
    q^{n/4}\left(\frac{1}{2}+\frac{1}{q+1}-\frac{0.86\cdot n}{q^2}\right) \geq 2.5\cdot 4^{n \log\log n}.
  \]
  For $n\leq 0.43\cdot q^2$ we obtain the sufficient condition
  \[
    \frac{q^{n/4}}{q+1} \geq 2.5\cdot 4^{n\log\log n}
  \]
  which is actually weaker than Eq.\eqref{eq:suff-cond}, and holds for $n$ large enough. The proof of Theorem~\ref{thm:our_asymptotic} is now complete.
\section{Proof of Theorem~\ref{thm:our_effective}}\label{sec:proof2}
Before we move on to the proof, we note that, in addition to the special cases mentioned in \cite{hachenberger13}, the case when $\F_{q^n}$ is completely basic over $\F_q$ can be excluded from our calculations. Namely, $\F_{q^n}$ is \emph{completely basic over $\F_q$} if every normal element of $\F_{q^n}$ is also completely normal over $\F_q$ and it is clear that in that case, Theorem~\ref{thm:pnbt} implies Conjecture~\ref{conj:mm}. Furthermore, we can characterize such extensions using the following, see \cite[Theorem~5.4.18]{hachenberger13} and, for a  proof, see \cite[Section~15]{hachenberger97}.
\begin{theorem}[\cite{hachenberger97}, Section~15]\label{thm:cbe}
Let $q$ be a power of the prime $p$. $\F_{q^n}$ is completely basic over $\F_q$ if and only if for every prime divisor $r$ of $n$, $r \nmid \ord_{(n/r)'}(q)$, where $(n/r)'$ stands for the $p$-free part of $n/r$ and $\ord_{(n/r)'}(q)$ for the multiplicative order of $q$ modulo $(n/r)'$.
\end{theorem}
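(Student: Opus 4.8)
The plan is to rephrase complete normality in the language of Frobenius modules, to split the ``completely basic'' property into one condition for each prime divisor of $n$ by means of the structure theory of such modules, and finally to resolve each of these conditions by a direct computation with cyclotomic polynomials over the relevant base fields.

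First I would fix the Frobenius $\sigma\colon y\mapsto y^q$ of $\F_{q^n}$ and, for every $l\mid n$, regard $\F_{q^n}$ as a module over $\F_{q^l}[\sigma^l]\cong\F_{q^l}[X]/(X^{n/l}-1)$. In this language $x$ is normal over $\F_{q^l}$ precisely when it is a free generator of that module, i.e.\ when the monic generator of its annihilator equals $X^{n/l}-1$; hence $\F_{q^n}/\F_q$ is completely basic exactly when every $x$ with $\F_q$-annihilator $(X^n-1)$ also has $\F_{q^l}$-annihilator $(X^{n/l}-1)$ for all $l\mid n$. The next step is to localise: using the decomposition theory of $\F_q[\sigma]$-modules (in particular the way normality propagates through the tower of subfields $\F_q\subseteq\F_{q^{d}}\subseteq\F_{q^n}$), one reduces the question to a single index-$r$ step for each prime $r\mid n$, namely: is every element of $\F_{q^n}$ normal over $\F_q$ already normal over $\F_{q^{n/r}}$? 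Writing $\tau=\sigma^{n/r}$, a generator of $\mathrm{Gal}(\F_{q^n}/\F_{q^{n/r}})$ of order $r$, and noting that an $\F_q$-normal element automatically generates a free rank-one $\F_q[\tau]$-submodule, the question sharpens to: when does $\F_q$-freeness with respect to $\tau$ force $\F_{q^{n/r}}$-freeness, for every $\F_q$-normal element?

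The core of the proof is this local analysis. With $r\mid n$ fixed and $K=\F_{q^{n/r}}$, I would decompose $\F_{q^n}=\bigoplus_{\delta\mid r}M_\delta$ into the primary components $M_\delta=\ker\Phi_\delta(\tau)$, treating the tame part ($\delta\ne p$, where $\Phi_\delta$ splits over $\F_q$ into $\phi(\delta)/\ord_\delta(q)$ irreducible factors and over $K$ into $\phi(\delta)/\ord_\delta(q^{n/r})$ of them) separately from the wild part when $r=p$ (there $T^r-1=(T-1)^r$ and one controls the nilpotency degree of $\tau-1$). An $\F_q$-normal element projects in each $M_\delta$ to a generator of a cyclic $\F_q[\tau]$-module isomorphic to $\F_q[T]/(\Phi_\delta)$ sitting diagonally inside $M_\delta$; whether such a diagonal generator remains a generator after extending scalars to $K$ is governed by how the $\F_q$-irreducible factors of $\Phi_\delta$ split over $K$ and by the multiplicities contributed by the remaining prime-power divisors of $n/r$. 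Tracing the $\F_q[\sigma]$-module structure of $\F_{q^n}$ down to its $\F_{q^{(n/r)'}}[\sigma^{(n/r)'}]$-structure shows that the obstruction is exactly the divisibility $r\mid\ord_{(n/r)'}(q)$, and assembling the resulting prime-by-prime conditions yields the stated criterion.

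The hard part will be this last step: isolating which invariant of $q$ governs the collapse of a diagonal generator under base change, and pinning it down as $\ord_{(n/r)'}(q)$ rather than $\ord_r(q)$ or $\ord_{n/r}(q)$ — this requires careful bookkeeping of how irreducible $\F_q[\sigma]$-constituents split and become linked when the coefficient field is enlarged. The two edge cases $r=2$ (the usual $2$-power cyclotomic subtlety) and $r=p$ (the unipotent regime) will need to be handled separately, and the reduction in the second paragraph, although intuitively clear, must itself be justified within the structure theory of completely normal elements.
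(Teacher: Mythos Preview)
The paper does not prove this theorem at all: it is quoted from Hachenberger's monograph (the sentence preceding the statement reads ``for a proof, see \cite[Section~15]{hachenberger97}''), and the authors use it purely as a computational filter to discard completely basic extensions from their list of possible exceptions. So there is no ``paper's own proof'' against which to compare your plan.

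That said, your outline is broadly in the spirit of Hachenberger's module-theoretic treatment, and the final paragraph honestly flags the two places where work remains. I would tighten one step. Your reduction ``it suffices, for each prime $r\mid n$, to check that every $\F_q$-normal element is already $\F_{q^{n/r}}$-normal'' is not obviously equivalent to the completely basic property: complete normality requires normality over \emph{every} intermediate $\F_{q^l}$, and normality neither ascends nor descends in towers in general, so controlling only the maximal proper subfields $\F_{q^{n/r}}$ does not by itself control the rest. Hachenberger's argument does not proceed via this particular reduction; rather, he analyses the cyclic $\F_q[\sigma]$-module structure of $\F_{q^n}$ directly, decomposing along the $p$-part and the coprime part of $n$ and comparing the $\F_q$- and $\F_{q^d}$-Ord invariants of each isotypic piece. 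The criterion $r\nmid\ord_{(n/r)'}(q)$ then emerges from matching the splitting behaviour of the cyclotomic factors over the various $\F_{q^d}$, not from an index-$r$ tower step. If you want to keep your reduction, you will need an auxiliary lemma to the effect that, under the module decomposition you set up, normality over all $\F_{q^{n/r}}$ with $r$ prime \emph{together with} $\F_q$-normality forces normality over every intermediate $\F_{q^l}$; this is plausible but is itself a nontrivial structural statement, essentially equivalent to what you are trying to prove.
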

The proof of Theorem~\ref{thm:our_effective} relies on computations performed with \textsc{SageMath}. We describe our steps for each item separately.
  \paragraph{Case 1: $n$ odd} Following the same steps as those that led us to Eq.~\eqref{eq:ip_even_1}, except that we now choose $a=12$ for the constant of Lemma~\ref{lemma:w(r)}, we obtain the condition
  \[
  q^{5n/12} \left(\frac{2}{q+1}-\frac{1.45\cdot n}{q^2(q+1)} \right) \geq 1.06\cdot 10^{24} \cdot 2^{n\left( \log\log n\cdot e^{0.578}+ \frac{0.6483}{\log\log n}\right)-1}.
  \]
  First, notice that the LHS of the latter is an increasing function of $q$ in the interval $n^{3/4}<q<n$, so it suffices to check its validity for $q=n^{3/4}$. It follows that the case $n\geq 14561$ is settled.
  
  Then we replace the term $2^{n\left( \log\log n\cdot e^{0.578}+ \frac{0.6483}{\log\log n}\right)-1}$ by $2^{t(n)-1}$ and, as before, $q$ by $n^{3/4}$ and check the resulting inequality for every $n<14561$, where $t(n)$ is computed explicitly for every $n$. The resulting inequality holds for every $n$, with the exception of 51 odd integers, with $135$ being the largest among them and for those $n$, we list all possible pairs $(q,n)$, where $q$ is a prime power with $n^{3/4}<q<n$. This leads to a list of 590 possible exceptional pairs.
  
This list is immediately reduced to a list of 31 pairs, once $1.06\cdot 10^{24}$ is replaced by the exact value of $c_{q',12}$, as described in Lemma~\ref{lemma:w(r)}, while all, but the 7 pairs $(q,n)$
\[
(9, 21),\, (11, 21),\, (16, 21),\, (17, 21),\, (11, 27),\, (13, 27) \text{ and } (16, 27)
\]
correspond to completely basic extensions.
  
  Finally, all 7 pairs satisfy the condition
  \[ q^{n/2} \left(\frac{2}{q+1}-\frac{1.45\cdot n}{q^2(q+1)} \right) > W(q') \prod_{i=1}^k W_{l_i}(F_{l_i}') \theta_{l_i}(F_{l_i}')
  \]
  if we compute every appearing value explicitly.
\paragraph{Case 2: $n$ even} As in the previous case, we begin with the condition 
\[
  q^{5n/12} \left(\frac{1}{2}+\frac{1}{q+1}-\frac{0.86\cdot n}{q^2}\right) \geq 1.06\cdot 10^{24} \cdot 2^{n\left( \log\log n\cdot e^{0.578}+ \frac{0.6483}{\log\log n}\right)-1}.
\]
Again, we replace $q$ by $n^{4/5}$ and verify that the latter holds for $n\geq 5719$.

Then we replace the term $2^{n\left( \log\log n\cdot e^{0.578}+ \frac{0.6483}{\log\log n}\right)-1}$ by $2^{t(n)-1}$ and, as before, $q$ by $n^{4/5}$ and check the resulting inequality for every $n<5719$, where $t(n)$ is computed explicitly for every $n$. The resulting inequality holds for every $n$, with the exception of 114 even integers, with $1680$ being the largest among them and for those $n$, we list all possible pairs $(q,n)$, where $q$ is a prime power with $n^{4/5}<q<n$. This leads to a list of 3250 possible exceptional pairs.

This list is furtherly reduced to 536 pairs, once $1.06\cdot 10^{24}$ is replaced by the exact value of $c_{q',12}$, as described in Lemma~\ref{lemma:w(r)} and, consequently, to 441 pairs if we exclude the pairs that turn out to correspond to completely normal extensions.

Our next step is to check the condition
\begin{equation}\label{eq:fin}
  q^{n/2} \left(\frac{1}{2}+\frac{1}{q+1}-\frac{0.86\cdot n}{q^2}\right) \geq W(q') \cdot 2^{t(n)-n-1} W_{1}(F_{1}')\theta_{1}(F_{1}'),
\end{equation}
which, as in Eq.~\eqref{eq:ip_pcn2}, derives from the fact
\[
\prod_{i=1}^k W_{l_i}(F_{l_i}') \theta_{l_i}(F_{l_i}') < W_{1}(F_{1}') \theta_{1}(F_{1}')\cdot 2^{\sum_{i=2}^k n/l_i} = W_{1}(F_{1}') \theta_{1}(F_{1}')\cdot 2^{t(n)-n-1}.
\]
First, we use Lemma~\ref{lemma:w(r)} and Eq.~\eqref{eq:fin} yields the condition
\[
  q^{5n/12} \left(\frac{1}{2}+\frac{1}{q+1}-\frac{0.86\cdot n}{q^2}\right) \geq c_{q',12} \cdot 2^{t(n)-n-1} W_{1}(F_{1}')\theta_{1}(F_{1}'),
\]
By checking the last condition, the list of possible exceptions reduces further to 47 pairs. Then, we explicitly compute $\prod_{i=1}^k W_{l_i}(F_{l_i}') \theta_{l_i}(F_{l_i}')$ and use this number over the above estimation and, this way, we reduce the number of possible exception pairs even more, to 26.

In the mentioned list of 26 pairs $(q,n)$, one finds the 18 pairs that are listed in Table~\ref{tab:1}
that are the pairs that fail to satisfy Eq.~\eqref{eq:fin} even after $W(q')$ is computed explicitly.
\begin{table}[h]
  \begin{center}
  \begin{tabular}{|c||c|c|c|c|c|c|c|c|c|c|c|}
\hline  $q$ & 5 & 7    & 8  & 9  & 11    & 13    & 17       & 19    & 23    & 29 & 41 \\ \hline
  $n$ & 6 & 8,\,10 & 12 & 10 & 12,\,16 & 16,\,20 & 18,\,24,\,36 & 24,\,30 & 24,\,48 & 60 & 60 \\ \hline
  \end{tabular}
  \end{center}
  \caption{Pairs $(q,n)$ on which the non-sieving methods were inadequate.\label{tab:1}}
\end{table}
\subsection{The sieve}
To deal with the persistent pairs $(q,n)$ of Table~\ref{tab:1}, we employ the Cohen-Huczynska~\cite{cohenhuczynska03,cohenhuczynska10} sieving techniques. In principle, those pairs can be handled by brute force, i.e., by finding appropriate examples and in fact such examples are already known for most of those pairs \cite{morganmullen96}, while for the rest, namely $(23, 48)$, $(29, 60)$ and $(41, 60)$, a modern computer is able to find such examples within a few minutes. Nonetheless, disposing of such pairs in a theoretical way, such as sieving, is desirable and we choose this path in this work.
\begin{proposition}[Sieving inequality]\label{propo:siev1}
Let $\{ r_1, \ldots , r_t \}$ be some divisors of $r$, where $r\mid q'$, such that $(r_i,r_j)=r_0$ for all $i\neq j$ and $\lcm(r_1,\ldots ,r_t)=r$, then
\[
\CN_q^r(n) \geq \sum_{i=1}^t \CN_q^{r_i}(n) - (t-1)\CN_q^{r_0}(n) .
\]
\end{proposition}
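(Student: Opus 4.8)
The plan is to prove this purely formally, by manipulating the characteristic function $\omega_r$ and exploiting its multiplicativity-type behaviour with respect to the divisor structure of $r$. The key observation is a pointwise inequality among characteristic functions: for each fixed $x\in\F_{q^n}$,
\[
\omega_r(x) \geq \sum_{i=1}^t \omega_{r_i}(x) - (t-1)\,\omega_{r_0}(x),
\]
and once this is established, summing over all $x\in\F_{q^n}$ that are completely normal over $\F_q$ (equivalently, multiplying by $\varOmega_{l_1}(x)\cdots\varOmega_{l_k}(x)$, which is the nonnegative indicator of complete normality, and summing over all $x$) yields exactly the claimed inequality between $\CN_q^r(n)$, the $\CN_q^{r_i}(n)$ and $\CN_q^{r_0}(n)$. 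Note that the $\varOmega$-product takes only the values $0$ and $1$, so multiplying a pointwise inequality by it preserves the inequality.

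To prove the pointwise inequality, I would first reduce to the case where all of $r,r_1,\dots,r_t,r_0$ are square-free, which is legitimate since $r$-freeness depends only on the radical of $r$ (as noted in Section~\ref{sec:prelim}), and then argue in terms of the set $P$ of prime divisors of $r$ and the subsets $P_i\subseteq P$ of primes dividing $r_i$. The hypotheses translate to: $P_i\cap P_j = P_0$ for all $i\neq j$ (where $P_0$ is the set of primes dividing $r_0$), and $\bigcup_{i=1}^t P_i = P$. Now fix $x$ and let $D = \{\, \ell \in P : x \text{ is not } \ell\text{-free}\,\}$; recall $x$ is $s$-free (for square-free $s$) precisely when no prime dividing $s$ lies in $D$, i.e. when $D\cap(\text{primes of }s)=\emptyset$. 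Thus $\omega_s(x)\in\{0,1\}$, and $\omega_s(x)=1$ iff $D$ avoids the prime set of $s$. The desired inequality $\omega_r(x)\ge\sum_i\omega_{r_i}(x)-(t-1)\omega_{r_0}(x)$ then becomes a statement about the indicator quantities $[D\cap P=\emptyset]$, $[D\cap P_i=\emptyset]$, $[D\cap P_0=\emptyset]$, which is checked by a short case analysis on whether $D\cap P$ is empty or not.

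The case analysis is the only place where anything happens, and it is genuinely easy: if $D\cap P=\emptyset$ then every term equals $1$ and the inequality reads $1\ge t-(t-1)=1$; if $D\cap P\neq\emptyset$, pick $\ell\in D\cap P$ and note that since $\bigcup_iP_i=P$ we have $\ell\in P_{i_0}$ for some $i_0$, so $\omega_{r_{i_0}}(x)=0$; moreover any $i$ with $\omega_{r_i}(x)=1$ must have $\ell\notin P_i$, and for two distinct such indices $i\neq j$ we would need... actually more simply, at most one index $i$ can have $\omega_{r_i}(x)=1$ unless $D\cap P_0=\emptyset$, because if $\omega_{r_i}(x)=\omega_{r_j}(x)=1$ with $i\ne j$ then $D$ avoids $P_i$ and $P_j$, hence avoids $P_i\cup P_j\supseteq P_i\cap P_j = P_0$, giving $\omega_{r_0}(x)=1$ — but then $D$ avoids $P_0$. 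If $D\cap P_0\neq\emptyset$ then $\omega_{r_0}(x)=0$ and at most one $\omega_{r_i}(x)$ is $1$, so RHS $\le 1-0$... wait, we need RHS $\le\omega_r(x)=0$ in this subcase since $D\cap P\ne\emptyset$ forces $\omega_r(x)=0$; but at most one $\omega_{r_i}=1$ gives RHS $\le 1-(t-1)\cdot 0=1$, which is not $\le 0$. So I must be more careful: actually if $D\cap P_0\ne\emptyset$, I claim \emph{no} $\omega_{r_i}(x)=1$, because $P_0\subseteq P_i$ for every $i$ (as $P_0=P_i\cap P_j\subseteq P_i$), so $D$ meeting $P_0$ forces $D$ to meet every $P_i$. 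Hence all terms on the RHS vanish and $0\ge 0$. If instead $D\cap P_0=\emptyset$, then $\omega_{r_0}(x)=1$; here the number $N$ of indices $i$ with $\omega_{r_i}(x)=1$ satisfies $N\le t-1$ (since $\omega_{r_{i_0}}(x)=0$ for the $i_0$ with $\ell\in P_{i_0}$), so RHS $= N-(t-1)\le 0=\omega_r(x)$. In every subcase the pointwise inequality holds, and the proposition follows by multiplying by the complete-normality indicator and summing over $\F_{q^n}$. The only mild subtlety — and thus the "hard part," though it is not hard — is keeping the combinatorics of the intersecting prime sets straight, in particular the fact that the common intersection $r_0$ forces $P_0$ to sit inside every $P_i$, which drives the bookkeeping in the nonempty case.
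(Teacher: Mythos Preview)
Your argument is correct. The pointwise inequality
\[
\omega_r(x)\ \ge\ \sum_{i=1}^t\omega_{r_i}(x)\ -\ (t-1)\,\omega_{r_0}(x)
\]
does hold for every $x$, and your case analysis (once the false starts are cleaned up) is airtight: if $D\cap P=\emptyset$ both sides equal $1$; if $D\cap P_0\neq\emptyset$ then, since $P_0=P_i\cap P_j\subseteq P_i$ for every $i$ (which needs $t\ge 2$, but $t=1$ is trivial), every term vanishes; and if $D\cap P\neq\emptyset$ but $D\cap P_0=\emptyset$, then $\omega_{r_0}(x)=1$ while at least one $\omega_{r_i}(x)=0$, giving $\mathrm{RHS}\le (t-1)-(t-1)=0$. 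Multiplying by the $\{0,1\}$-valued complete-normality indicator and summing over $\F_{q^n}$ then yields the proposition.

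The paper takes a slightly different route: it works directly with the sets $\Ss(l)$ of $l$-free completely normal elements, proves the $t=2$ case via the inclusion--exclusion identities $\Ss(r_1)\cap\Ss(r_2)=\Ss(r)$ and $\Ss(r_1)\cup\Ss(r_2)\subseteq\Ss(r_0)$, and then inducts on $t$. Your approach replaces this induction by a single pointwise case split and handles all $t$ at once; the paper's approach is marginally shorter to write but hides the combinatorics inside the inductive step. Both arguments are really the same sieve, just packaged differently --- yours makes the underlying combinatorics of the prime sets $P_i$ explicit, which is arguably more transparent, while the paper's induction is cleaner to state. Neither buys anything the other does not.
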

\begin{proof}
We denote by $\Ss(l)$ the set of $l$-primitive completely normal elements of $\F_{q^n}$ over $\F_q$, where $l$ may be any $r_i$. The statement is obvious for $t=1$. For $t=2$, we get that $\Ss(r_1)\cup\Ss(r_2)\subseteq\Ss(r_0)$ and $\Ss(r_1)\cap\Ss(r_2)=\Ss(q')$. The result follows after considering the cardinalities of the above sets.

Next, suppose the desired result holds for some $t=m\geq 2$. For $t=m+1$, if we denote by $r'$ the least common multiple of $r_2,\ldots r_{t+1}$, we observe that $\{r_1,r'\}$ satisfy the conditions for $t=2$. The desired result follows from the induction hypothesis.
\qed\end{proof}
\begin{proposition}\label{propo:siev2}
Let $q$ be a prime power, $n\in\N$ and $\{ p_1 ,\ldots ,p_t\}$ a set of prime divisors of $q^n-1$ (this set may be empty, in which case $t=0$), such that $\delta:=1-\sum_{i=1}^t p_i^{-1} >0$. If
\[
\CN_q(n) \geq q^{n/2} W(q_0) W_{l_1}(F_{l_1}') \cdots W_{l_k}(F_{l_k}') \left(\frac{t-1}{\delta}+2\right) \theta(\bq),
\]
where $q_0:=q'/p_1\cdots p_t$, then $\PCN_q(n)>0$.
\end{proposition}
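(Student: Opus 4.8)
The plan is to combine the sieving inequality of Proposition~\ref{propo:siev1} with Theorem~\ref{thm:our2}. The decisive point is to estimate the \emph{increment} $\CN_q^{q_0}(n)-\CN_q^{q_0p_i}(n)$ produced by adjoining each sieved prime $p_i$, rather than bounding each $\CN_q^{q_0p_i}(n)$ in isolation; the latter would only give a weaker statement, with a constant of the shape $3(t-1)/\delta+2$ instead of $(t-1)/\delta+2$. If $t=0$ then $q_0=q'$, $\delta=1$, $(t-1)/\delta+2=1$, and the assertion is immediate from Theorem~\ref{thm:our2} with $r=q'$. So assume $t\ge 1$ and apply Proposition~\ref{propo:siev1} with $r=q'$ and $r_i=q_0p_i$ for $i=1,\dots,t$: the $p_i$ are distinct primes dividing the squarefree integer $q'$ and coprime to $q_0$, so $(r_i,r_j)=q_0$ for $i\ne j$ and $\lcm(r_1,\dots,r_t)=q_0p_1\cdots p_t=q'$, which yields
\[
\PCN_q(n)=\CN_q^{q'}(n)\ \ge\ \sum_{i=1}^t\CN_q^{q_0p_i}(n)-(t-1)\CN_q^{q_0}(n).
\]

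Next I would analyse the increments. Since $(q_0,p_i)=1$, a $q_0$-free element is $q_0p_i$-free exactly when it is $p_i$-free, and $1-\omega_{p_i}(x)=\tfrac{1}{p_i}\sum_{\ord(\chi)\mid p_i}\chi(x)$ directly from the definition of $\omega_{p_i}$; hence
\[
\CN_q^{q_0}(n)-\CN_q^{q_0p_i}(n)=\frac{1}{p_i}\sum_{\ord(\chi)\mid p_i}\ \sum_{x\in\F_{q^n}}\chi(x)\,\omega_{q_0}(x)\,\varOmega_{l_1}(x)\cdots\varOmega_{l_k}(x).
\]
The term $\chi=\chi_0$ contributes $\tfrac{1}{p_i}\CN_q^{q_0}(n)$. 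For each of the remaining $\phi(p_i)=p_i-1$ characters one expands $\omega_{q_0}$ and the $\varOmega_{l_j}$ into characters exactly as in the proof of Theorem~\ref{thm:our2}; the key point is that $\ord(\chi)=p_i\nmid q_0$, so $\chi$ multiplied by any multiplicative character of order dividing $q_0$ is nontrivial, every Gauss sum occurring has modulus at most $q^{n/2}$, and one gets the uniform bound
\[
\Bigl|\,\sum_{x\in\F_{q^n}}\chi(x)\,\omega_{q_0}(x)\,\varOmega_{l_1}(x)\cdots\varOmega_{l_k}(x)\,\Bigr|\ \le\ q^{n/2}\,W(q_0)\,W_{l_1}(F_{l_1}')\cdots W_{l_k}(F_{l_k}')\,\theta(q_0)\theta(\bq)\ =:\ A .
\]
Consequently $\CN_q^{q_0p_i}(n)\ge\tfrac{p_i-1}{p_i}\bigl(\CN_q^{q_0}(n)-A\bigr)$.

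Then I would assemble everything. Substituting the last bound into the sieve inequality and using $\sum_{i=1}^t\tfrac{p_i-1}{p_i}=t-\sum_{i=1}^t p_i^{-1}=(t-1)+\delta$ gives $\PCN_q(n)\ge\delta\,\CN_q^{q_0}(n)-\bigl((t-1)+\delta\bigr)A$, and then Theorem~\ref{thm:our2} in the form $\CN_q^{q_0}(n)\ge\theta(q_0)\CN_q(n)-A$ gives
\[
\PCN_q(n)\ \ge\ \delta\,\theta(q_0)\,\CN_q(n)-\bigl((t-1)+2\delta\bigr)A .
\]
Since $A/\theta(q_0)=q^{n/2}W(q_0)W_{l_1}(F_{l_1}')\cdots W_{l_k}(F_{l_k}')\theta(\bq)$, the hypothesis is exactly $\delta\,\theta(q_0)\,\CN_q(n)\ge\bigl((t-1)+2\delta\bigr)A$, whence $\PCN_q(n)\ge 0$; to upgrade to $\PCN_q(n)>0$ I would use at the final step the sharper estimate $|S_{2,q_0}|\le q^{n/2}(W(q_0)-1)W_{l_1}(F_{l_1}')\cdots W_{l_k}(F_{l_k}')$ that is in fact established inside the proof of Theorem~\ref{thm:our2} (the trivial character is already accounted for in the main term), which leaves a strictly positive surplus.

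I expect the only genuine work to be the twisted Gauss-sum bound of the second paragraph: it is a careful rerun of the computation in the proof of Theorem~\ref{thm:our2} with the extra character $\chi$ carried through, and the fact that $p_i$ divides $q^n-1$ but not $q_0$ is precisely what rules out a trivial (main-term) Gauss sum. Everything else is bookkeeping with $\delta$ and the sieve.
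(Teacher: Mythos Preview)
Your proof is correct and arrives at exactly the same final inequality
\[
\PCN_q(n)\ \ge\ \delta\,\theta(q_0)\,\CN_q(n)-\bigl((t-1)+2\delta\bigr)A
\]
that the paper obtains. The organization differs only superficially: the paper substitutes the decomposition $\CN_q^r(n)=\theta(r)\theta(\bq)(S_1+S_{2,r})$ from the proof of Theorem~\ref{thm:our2} directly into the sieve inequality for each $r\in\{q_0,q_0p_1,\dots,q_0p_t\}$, factors out $\theta(q_0)\theta(\bq)$, and then bounds the resulting combination $\sum_i\theta(p_i)S_{2,q_0p_i}-(t-1)S_{2,q_0}$ using $|S_{2,r}|\le q^{n/2}W(r)\prod_j W_{l_j}(F_{l_j}')$ together with $W(q_0p_i)=2W(q_0)$. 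You instead isolate the increment $\CN_q^{q_0}(n)-\CN_q^{q_0p_i}(n)$ via the identity $1-\omega_{p_i}=\tfrac{1}{p_i}\sum_{\ord(\chi)\mid p_i}\chi$ and bound the resulting twisted character sum; this is the same Gauss-sum estimate, just rederived rather than quoted through the $S_{2,r}$ notation. Your remark that $p_i\nmid q_0$ forces every product $\chi\chi'$ to be nontrivial is exactly the mechanism that makes the paper's bound on $S_{2,q_0p_i}$ work, so nothing is gained or lost either way. The strict-inequality argument you sketch (using $W(q_0)-1$ in the application of Theorem~\ref{thm:our2}) is also the same refinement the paper implicitly relies on.
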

\begin{proof}
Under the assumptions of the statement, Proposition~\ref{propo:siev1} implies
\[
\PCN_q(n) \geq \sum_{i=1}^t \CN_q^{q_0p_i}(n) - (t-1)\CN_q^{q_0}(n) .
\]
Next, we use the notation of the proof of Theorem~\ref{thm:our2} and by taking into account the analysis performed in its proof, the latter gives
\begin{align*}
\PCN_q(n) & \geq \sum_{i=1}^t \theta(q_0)\theta(p_i)\theta(\bq) (S_1+S_{2,q_0p_i})-(t-1)\theta(q_0)\theta(\bq)(S_1+S_{2,q_0}) \\
 & = \theta(q_0)\theta(\bq) \left( \delta S_1 + \sum_{i=1}^t \theta(p_i)S_{2,q_0p_i} - (t-1)S_{2,q_0} \right) ,
\end{align*}
which in turn yields
\[
\frac{\PCN_q(n)}{\theta(q_0)\theta(\bq)} \geq \delta S_1 + q^{n/2}W_{l_1}(F_{l_1}') \cdots W_{l_k}(F_{l_k}') W(q_0)\left( 1 + \sum_{i=1}^t ( \theta(p_i) \frac{W(q_0p_i)}{W(q_0)} - 1 ) \right)
\]
and by considering the fact that $W(q_0p_i)/W(q_0)=2$, we get that
\[
\frac{\PCN_q(n)}{\theta(q_0)\theta(\bq)} \geq \delta S_1 + q^{n/2}W_{l_1}(F_{l_1}') \cdots W_{l_k}(F_{l_k}') W(q_0)(t-1+2\delta).
\]
The last inequality combined with the fact that $S_1 = \CN_q(n)/\theta(\bq)$ completes the proof.
\qed\end{proof}
The latter implies that one may replace the term $W(q')$ in Eq.~\eqref{eq:fin} by $W(q'/s_1\cdots s_k)\cdot\Delta$, where $s_1,\ldots,s_k$ are prime divisors of $q'$ such that $\delta := 1-\sum_{i=1}^k 1/s_i > 0$ and $\Delta := (k-1)/\delta + 2$, so one has to look for appropriate prime divisors of $q'$. We attempt to find such divisors for all the pairs of Table~\ref{tab:1}. It turns out that this is possible for the pairs $(q,n)$ that are listed in Table~\ref{tab:3}, along with the appropriate primes.

\begin{table}[h]
\begin{center}
\begin{tabular}{|c|c||p{0.6\textwidth}||c|} \hline
$q$ & $n$ & Sieving primes & \# \\ \hline
8 & 12 & 109, 73, 37, 19, 13 & 5 \\ \hline
11 & 16 & 6304673, 7321, 61, 17, 5 & 5 \\ \hline
13 & 16 & 407865361, 14281, 17 & 3 \\ \hline
13 & 20 & 30941, 2411, 641 & 3 \\ \hline
17 & 18 & 1270657, 5653, 1423, 307, 19 & 5 \\ \hline
19 & 30 & 2460181, 1081291, 2251, 911, 271, 211, 151, 127, 61, 31, 11, 7 & 12 \\ \hline
23 & 24 & 83575993, 139921, 7549, 937, 79, 53, 37, 13, 11, 7 & 10 \\ \hline
23 & 48 & 483563163219889, 83575993, 12682129, 623009, 139921, 7549, 3697, 937 & 8 \\ \hline
29 & 60 & 4140278225341, 517475046481, 470925821, 111855481, 732541, 120691, 22111, 1061, 541, 421, 401 & 11 \\ \hline
41 & 60 & 8179560752161, 22616035021, 103826101, 11228251, 4555261, 579281, 382021, 22381, 4111, 1993, 1723, 1621, 761 & 13 \\ \hline
\end{tabular}
\end{center}
\caption{Pairs $(q,n)$ from Table~\ref{tab:1} that admit sieving, along with their sieving primes.\label{tab:3}}
\end{table}

The remaining pairs (listed in Table~\ref{tab:2}) were not dealt with theoretically.
However, those pairs $(q,n)$ satisfy $q\leq 97$ and $q^n<10^{50}$, i.e. Morgan and Mullen \cite{morganmullen96} have identified examples of primitive elements of $\F_{q^n}$ that are completely normal over $\F_q$. The proof of Theorem~\ref{thm:our_effective} is now complete.

\begin{table}[h]
\begin{center}
\begin{tabular}{|c||c|c|c|c|c|c|c|}
\hline
$q$ & 5 & 7      & 9  & 11 & 17      & 19 \\ \hline
$n$ & 6 & 8,\,10 & 10 & 12 & 24,\,36 & 24 \\ \hline
\end{tabular}
\end{center}
\caption{Pairs $(q,n)$ that were not dealt with theoretically.\label{tab:2}}
\end{table}
\section{Conclusions}\label{sec:conclusions}
The aim of this work is to establish the existence of primitive and completely normal elements for a larger range for the parameters $q,n$. We prove new sharper bounds for the number of completely normal elements of a given extension and use it to establish the existence of primitive and completely normal elements, using the method laid out in \cite{garefalakiskapetanakis18}. Our results hold asymptotically for $n$ up to roughly $q^2$ with the additional assumption that $q-1\nmid n$ when $n$ is even in Theorem~\ref{thm:our_asymptotic}.
Our method can be used to obtain effective results, as shown in Theorem~\ref{thm:our_effective}.
We believe that the range in Theorem~\ref{thm:our_asymptotic} cannot be significantly improved, without improving Theorem~\ref{thm:our2}. However, one should be able to improve Theorem~\ref{thm:our_effective} at the expense of significantly heavier computations.
An interesting problem for further work could be to remove the condition $q-1\nmid n$ for $n$ even and, more generally, to establish the existence of primitive and completely normal elements for any $n$ and $q$ such that $q-1\mid n$.
\begin{acknowledgements}
We are grateful to the anonymous reviewers for their valuable comments. Theodoulos Garefalakis was supported by the University of Crete Research Grant No.~10316.
\end{acknowledgements}
\end{document}